\documentclass[scheme=plain]{ctexart}

\usepackage[a4paper,margin=1in]{geometry}
\usepackage{amsmath,amssymb,amsthm}
\usepackage{mathtools}
\usepackage{graphicx}
\IfFileExists{arxiv/references.bib}{}{}

\usepackage{float}
\usepackage{xurl}
\usepackage{hyperref}
\usepackage{tikz}
\usetikzlibrary{calc,arrows.meta,positioning,bending}
\usepackage{authblk}
\usepackage{pgfplots}
\pgfplotsset{compat=1.18}
\pgfplotsset{
  colormap={viridis}{
    rgb255=(68,1,84) rgb255=(72,40,120) rgb255=(62,74,137)
    rgb255=(49,104,142) rgb255=(38,130,142) rgb255=(31,158,137)
    rgb255=(53,183,121) rgb255=(110,206,88) rgb255=(181,222,43)
    rgb255=(253,231,37)
  }
}
\definecolor{crimson}{RGB}{220,20,60}
\definecolor{markerorange}{RGB}{230,140,10}

\newenvironment{restated}[1]{%
  \par\noindent\textbf{Theorem #1.}\itshape\ }%
  {\par}

\newenvironment{restatedconjecture}[1]{%
  \par\noindent\textbf{Conjecture #1.}\itshape\ }%
  {\par}

\newtheorem{theorem}{Theorem}[section]
\newtheorem{lemma}[theorem]{Lemma}
\newtheorem{definition}[theorem]{Definition}
\newtheorem{example}[theorem]{Example}
\newtheorem{remark}[theorem]{Remark}
\newtheorem{corollary}[theorem]{Corollary}

\newtheorem{proposition}[theorem]{Proposition}
\newtheorem{conjecture}[theorem]{Conjecture}

\newcommand{\dimbox}{\dim_\mathrm{B}}

\newcommand{\dimhaus}{\dim_\mathrm{H}}

\definecolor{moireblue}{HTML}{7DB7FF}
\definecolor{moirered}{HTML}{FF9E9E}

\newcommand{\SierTriPath}[4]{%
    \ifnum#1=0
        #2 -- #3 -- #4 -- cycle
    \else
        \SierTriPath{\number\numexpr#1-1\relax}{#2}{($ #2!0.5!#3 $)}{($ #4!0.5!#2 $)}%
        \SierTriPath{\number\numexpr#1-1\relax}{($ #2!0.5!#3 $)}{#3}{($ #3!0.5!#4 $)}%
        \SierTriPath{\number\numexpr#1-1\relax}{($ #4!0.5!#2 $)}{($ #3!0.5!#4 $)}{#4}%
    \fi
}

\newcommand{\RotatedMoireIntersection}[2]{%
    \begin{scope}
        \clip
            \SierTriPath{#1}{(0,1)}{(-0.8660254,-0.5)}{(0.8660254,-0.5)};
        \begin{scope}[rotate around={#2:(0,0)}]
            \path[fill=black]
                \SierTriPath{#1}{(0,1)}{(-0.8660254,-0.5)}{(0.8660254,-0.5)};
        \end{scope}
    \end{scope}
}

\newcommand{\RotatedMoireStage}[2]{%
    \path[fill=moireblue]
        \SierTriPath{#1}{(0,1)}{(-0.8660254,-0.5)}{(0.8660254,-0.5)};
    \begin{scope}[rotate around={#2:(0,0)}]
        \path[fill=moirered]
            \SierTriPath{#1}{(0,1)}{(-0.8660254,-0.5)}{(0.8660254,-0.5)};
    \end{scope}
    \RotatedMoireIntersection{#1}{#2}%
}

\newcommand{\LineMoirePattern}{%
    \def\A{1.50}%
    \begin{scope}[rotate around={-4:(0,0)}]
        \clip (-\A,-\A) rectangle (\A,\A);
        \foreach \y in {-3,-2.88,...,3} {
            \draw[line width=0.85pt, black, opacity=0.88] (-3.1,\y) -- (3.1,\y);
        }
    \end{scope}
    \begin{scope}[rotate around={4:(0,0)}]
        \clip (-\A,-\A) rectangle (\A,\A);
        \foreach \y in {-3,-2.88,...,3} {
            \draw[line width=0.85pt, black, opacity=0.52] (-3.1,\y) -- (3.1,\y);
        }
    \end{scope}
}

\title{Moir\'e Pattern of Rotated Sierpi\'nski Gaskets}
\author{Ziyu Neroli}
\affil{Department of Mathematics, Imperial College London, United Kingdom}

\date{}

\begin{document}

\maketitle

\begin{abstract}
    We study the moir\'e pattern formed by the intersection of a planar Sierpi\'nski gasket and a copy rotated about its centre.
    At every resonant angle, we prove that the intersection is of finite type and compute its Hausdorff and Minkowski dimensions from an explicit finite matrix; for Lebesgue-almost every angle, we bound its upper Minkowski dimension by $\overline{\dim}_{\mathrm B}(\mathcal S_\theta)\le2\dimhaus(S)-2$.
    Finally, we conjecture that both dimensions equal this bound at every non-resonant angle.
    Sections~2 and~3 are formalised in Lean.
\end{abstract}

\begin{center}
\begin{tikzpicture}[line join=round,line cap=round,scale=2.3]
    \RotatedMoireStage{6}{39}
    \begin{scope}[xshift=2.7cm]
        \RotatedMoireIntersection{6}{39}
    \end{scope}
\end{tikzpicture}
\end{center}

\tableofcontents

\section{Introduction}

\begin{figure}[ht]
\centering
\begin{minipage}[c]{0.29\textwidth}
\centering
\begin{tikzpicture}[line cap=round,line join=round,scale=0.72]
    \LineMoirePattern
\end{tikzpicture}
\par\smallskip
{\small (a) Rotated parallel-line layers}
\end{minipage}\hfill
\begin{minipage}[c]{0.36\textwidth}
\centering
\includegraphics[width=\linewidth]{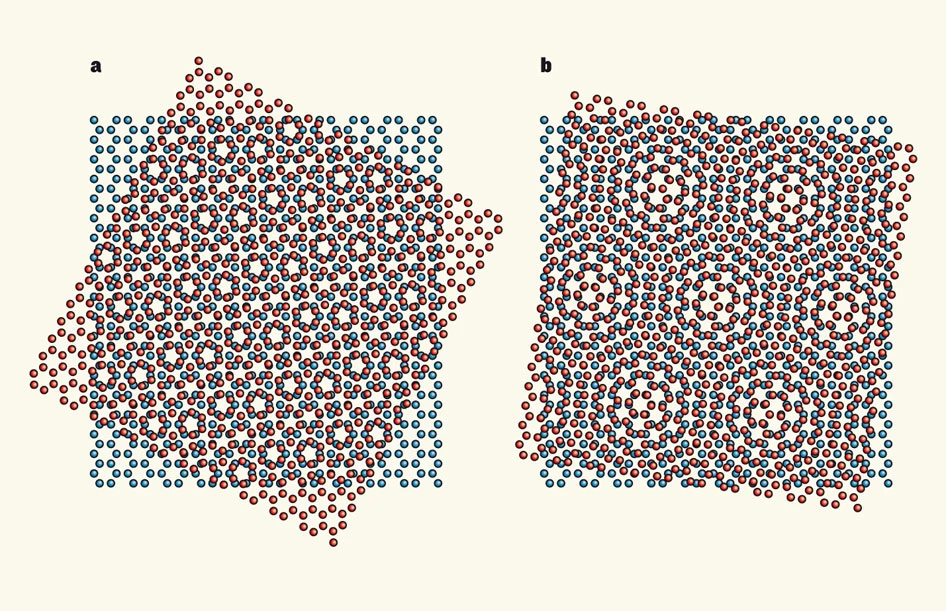}
\par\smallskip
{\small (b) Misoriented honeycomb lattices \cite{macdonald2011graphene}}
\end{minipage}\hfill
\begin{minipage}[c]{0.29\textwidth}
\centering
\begin{tikzpicture}[line join=round,line cap=round,scale=1.15]
    \RotatedMoireStage{4}{45}
\end{tikzpicture}
\par\smallskip
{\small (c) Moir\'e pattern $S\cap\mathbf R_{\pi/4}S$}
\end{minipage}
\caption{Three examples of moir\'e patterns. Panel (a) shows the classical visual effect obtained by superimposing two slightly misoriented families of parallel lines. Panel (b) shows two examples of misoriented honeycomb lattices from MacDonald and Bistritzer \cite{macdonald2011graphene}. Panel (c) shows the fourth pre-gasket approximation to a fractal moir\'e pattern $S\cap\mathbf R_\theta S$ at a representative non-resonant angle.}
\label{fig:moire-background-comparison}
\end{figure}

Moir\'e patterns originate from textile terminology: the word refers to the watered appearance of certain silk fabrics, and was later used for the large-scale geometric patterns produced by superimposing two similar line or lattice structures \cite{BritannicaMoire}.
The first scientific analysis of moir\'e fringes is usually traced to Rayleigh's 1874 work on diffraction gratings, where superposed gratings were used as a test for grating fidelity \cite{Rayleigh1874,Oster1965Moire}.
In modern materials science, moir\'e patterns are standard objects in layered two-dimensional materials: a small twist angle or a lattice mismatch between two crystal layers creates a long-wavelength spatial modulation, often called a moir\'e superlattice \cite{BistritzerMacDonald2011,Andrei2021Marvels,He2021MoireReview}.
Structural distortions can also modify electronic properties: Wu and Ganose showed that tilting in phosphide antiperovskites induces charge localisation and changes their photovoltaic properties \cite{WuGanose2024}.

Our motivation comes from the experimental realisation of molecular Sierpi\'nski triangle fractals.
Shang et al. assembled defect-free molecular Sierpi\'nski triangles on an Ag(111) surface by molecular self-assembly \cite{Shang2015Sierpinski}.
This suggests a natural geometric question: if a molecular layer has the geometry of a Sierpi\'nski gasket, what should its moir\'e pattern with a rotated copy look like?

\begin{figure}[h]
    \centering
    \includegraphics[width=0.8\linewidth]{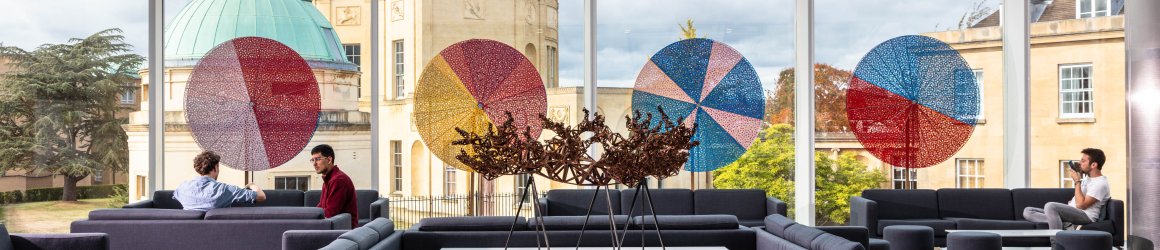}
    \caption{\emph{Beacons} in the common room of the Mathematical Institute, University of Oxford.}
    \label{fig:oxford-beacons}
\end{figure}

It is perhaps worth remarking that the generation of moiré patterns by rotation belongs to art as much as to mathematics. 
Four such pieces, Conrad Shawcross's \emph{Beacons}, hang in the common room of the Mathematical Institute at the University of Oxford (Figure~\ref{fig:oxford-beacons})\footnote{\url{https://www.maths.ox.ac.uk/about-us/art-and-oxford-mathematics/cascading-principles-expansions-within-geometry-philosophy-and-interference}}. 
Each is built from two coloured discs carrying a non-repeating pattern of perforations, rotating slowly in opposite directions, so that the shifting overlap of the two hole-patterns casts a continuously evolving moiré in the transmitted light. 
These works already move beyond the periodic moiré of classical optics towards something less regular; the rotational intersection of a Sierpi\'nski gasket with a rotated copy considered here may be read as a further step along the same line, in which the underlying pattern is self-similar and the dimension of its intersection with a rotated copy becomes the central question.
We note this only to suggest that the construction may lend itself to kinetic or optical art as readily as to dimension theory.

In this paper we isolate the geometric core of this question.
We use the term moir\'e pattern in a precise geometric sense.
Let $S$ be the standard planar Sierpi\'nski gasket, centred at the origin, and let $\mathbf R_\theta$ be rotation by angle $\theta$ about the origin.
We define the rotated moir\'e pattern at angle $\theta$ by
\[
\mathcal S_\theta:=S\cap \mathbf R_\theta S.
\]
Thus the centre is fixed, and only one copy of the gasket is rotated.
The long-term problem is to determine the Hausdorff dimension of $\mathcal S_\theta$ for every rotation angle $\theta$.
By the symmetries of the gasket, it is enough to consider $\theta\in[0,\pi/3]$.
We call $\theta$ a \emph{resonant angle} if $e^{i\theta}\in\mathbb Q(\omega)$, where $\omega:=e^{2\pi i/3}$, and non-resonant otherwise.


Related work on gasket intersections includes graph-directed constructions, translated intersections and linear slices.
McClure constructs finite graph-directed systems for intersections of self-similar sets with transformed copies under a finite-closure hypothesis, and gives a lattice criterion ensuring this hypothesis \cite[Theorem~1 and Lemma~2]{McClure2008}.
The finite intersection graph at our resonant angles is a specialisation of this construction; we group relative displacements into set-valued states to verify the open set condition and obtain the dimension formula.
McClure does not establish the converse classification of finite-type angles or the almost-everywhere upper bound proved here.
Cai and Li study translated intersections of generalised Sierpi\'nski gaskets with contraction ratio $1/q$, where $2<q<3$, under a unique-expansion condition on the translation coordinates \cite{CaiLi2020}; these hypotheses exclude the standard gasket considered here.
B\'ar\'any, Ferguson and Simon study dimensions and multifractal properties of planar gasket slices \cite{BaranyFergusonSimon2012}, while Nakajima treats fixed-direction hyperplane slices of higher-dimensional gaskets \cite{Nakajima2022}.
Pradhan et al. numerically investigate overlap statistics of regular and random gaskets in a fractal contact model \cite{PradhanEtAl2003}, and Motyka studies moir\'e patterns from generalised Cantor gratings \cite{Motyka2021}.

We study how the intersection dimension varies with the relative rotation, keeping the centres fixed. Our approach uses finite transition matrices at resonant angles and estimates for the difference measure at almost every non-resonant angle.

\subsection{Main results and conjecture}

In Section~\ref{sec:carry-geometry}, we determine the Hausdorff and box dimensions of the moir\'e pattern at every resonant angle.
The calculation uses an explicit finite matrix $\mathbf A_\theta$, whose entries count labelled transitions between sets of relative displacements; see Definition~\ref{def:relative-displacement-matrix}.
Recall that $\omega:=e^{2\pi i/3}$ and $\dimhaus(S)=\log3/\log2$.

\begin{restated}{\ref{thm:commensurable-gives-finite-type}}
Let $\theta\in[0,\pi/3]$. If $e^{i\theta}\in\mathbb Q(\omega)$, then
\[
\dimhaus(\mathcal S_\theta)=\dimbox(\mathcal S_\theta)
=\frac{\log\rho(\mathbf A_\theta)}{\log2}.
\]
\end{restated}

Theorem~\ref{thm:finite-type-implies-commensurable} proves the converse structural statement: a nonempty intersection with only finitely many reachable live relative displacements must occur at a resonant angle.
Thus the finite-type angles are exactly the resonant angles, as stated in Corollary~\ref{cor:finite-type-commensurable-dichotomy}.

In Section~\ref{sec:ae-upper}, we turn to rotations beyond this countable family.
We estimate the number of intersecting pairs of small gasket pieces through the planar difference measure and obtain the following upper bound.

\begin{restated}{\ref{thm:ae-upper}}
For Lebesgue-almost every $\theta$,
\[
\overline{\dim}_{\mathrm B}(\mathcal S_\theta)\le 2\dimhaus(S)-2.
\]
\end{restated}

The remaining problem is to determine both dimensions at every non-resonant angle.
Even the matching Hausdorff lower bound for almost every angle is open: our two gaskets have fixed coincident centres, so a result for generic translations does not settle this intersection.
Section~\ref{sec:ae-lower} discusses the obstacles to the lower bound.

Figure~\ref{fig:intro-resonant-dimensions} shows dimensions computed from the spectral-radius formula at resonant angles, together with finite-scale estimates at selected non-resonant angles.
The non-resonant estimates are close to $2\dimhaus(S)-2$, providing numerical support for the conjecture below.
Section~\ref{sec:conjecture} describes the computations and their limitations.

\begin{figure}[H]
\centering
\includegraphics[width=0.7\linewidth]{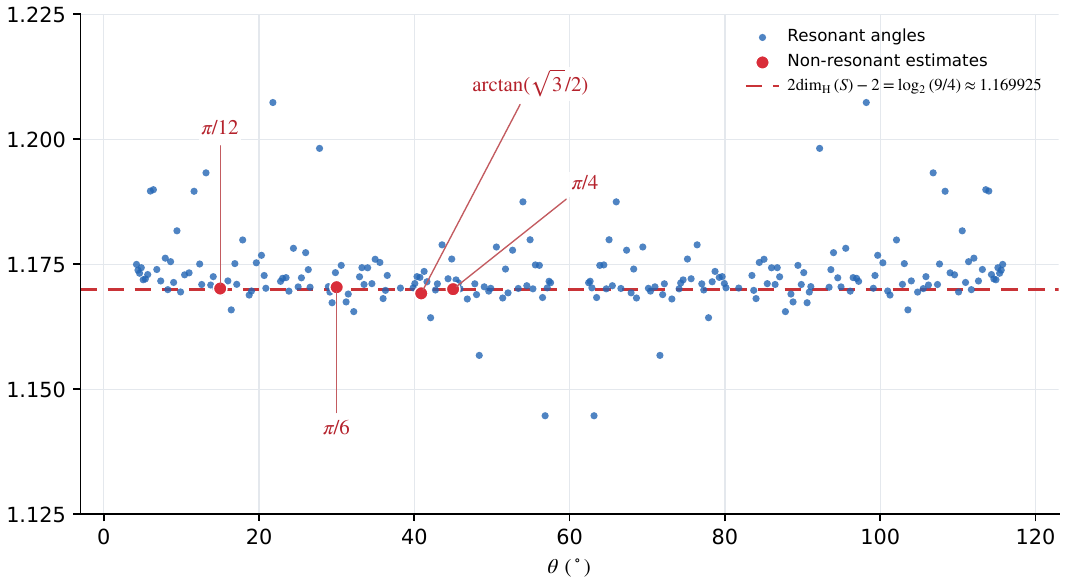}
\caption{Dimensions and finite-scale estimates.}
\label{fig:intro-resonant-dimensions}
\end{figure}

\begin{restatedconjecture}{\ref{conj:non-resonant-dimension}}
Let $\theta\in[0,\pi/3]$ with $e^{i\theta}\notin\mathbb Q(\omega)$.
Then
\[
\dimhaus(\mathcal S_\theta)=\dimbox(\mathcal S_\theta)=2\dimhaus(S)-2.
\]
\end{restatedconjecture}

\subsection{Lean formalisation}
Generally Sections~\ref{sec:carry-geometry} and~\ref{sec:ae-upper} have been formalised in Lean.

The results of Section~\ref{sec:carry-geometry} are formalised in Lean for the gasket defined by infinite address sums.
The development verifies the displacement recursion, the resonant-angle characterisations, the finite-state graph and its Hausdorff and box dimension formula, including the example at $\theta=\pi/3$.
The dimension proof constructs a path probability measure and proves its geometric mass estimates; it does not assume the graph-directed dimension theorem.
Lean also verifies the polygon-filling identities in Lemma~\ref{lem:filling}.
For Section~\ref{sec:ae-upper}, Lean verifies the actual measures and their self-similar identities, both almost-everywhere separation lemmas and the directional-resonance characterisation.
The proof in Section~\ref{sec:ae-upper} uses five analytic results from two papers as black boxes.
Three come from Corso--Shmerkin: the planar ambient-dimension bound \cite[Section~1.2]{corso2024dynamical} and the one-dimensional and planar specialisations of \cite[Corollary~4.2]{corso2024dynamical}.
The other two are the one-dimensional ambient-dimension bound and the convolution inequality from Rossi--Shmerkin \cite[Section~1, equation~(1.2)]{rossi2018measures}.
These five results are declared as external axioms; their proofs are not formalised here.
Lean verifies all their hypotheses for the measures used in this paper, including compact support, self-similarity, exponential separation and the required projection inequalities, and checks the remaining arguments.
Section~\ref{sec:carry-geometry} is independent of these black boxes.

The development uses Lean and mathlib version 4.32.1.
The source, dependency manifest, build instructions and verification records are stored at
\begin{center}
\url{https://github.com/Nero-17/LEAN-Formalisation-Moire-Pattern-Sierpinski-Gasket}.
\end{center}
The repository is public and contains the Lean development, numerical data and English documentation; the manuscript is not hosted there.
The accompanying dependency audit records the precise sources and specialisations of the five external axioms and checks that no other custom axioms or unfinished proofs enter the final results.

\section{Resonant angles and finite-type intersections}
\label{sec:carry-geometry}

The self-similarity of the gasket turns an intersection at one scale into nine
intersection problems at the next.  We first describe this construction for an
arbitrary rotation and translation; the arithmetic of the rotation will enter
only in Subsection~\ref{sec:commensurable}.

Identify the plane with $\mathbb C$, put $\omega:=e^{2\pi i/3}$, and take the
vertices $\vec p_0:=1$, $\vec p_1:=\omega$ and $\vec p_2:=\omega^2$.
Thus the triangle $T:=\operatorname{conv}\{\vec p_0,\vec p_1,\vec p_2\}$ has
centre $\vec{0}$.  For $\mathcal I:=\{0,1,2\}$, the gasket $S$ is the unique
non-empty compact set satisfying
\[
S=\bigcup_{i\in\mathcal I}F_i(S),
\qquad F_i(\mathbf x):=\frac{\mathbf x+\vec p_i}{2}.
\]
Each $F_i(S)$ is a half-size copy of $S$, centred at $\vec p_i/2$.
Here and below, the centre of a gasket means the centre of its convex hull.
For a word $\mathbf i:=(i_1,\ldots,i_n)\in\mathcal I^n$, define $F_{\mathbf i}:=F_{i_1}\circ\cdots\circ F_{i_n}$.

Let $\mathbf R_\theta$ denote rotation about the origin and write
$S_\theta:=\mathbf R_\theta S$.
For a relative displacement $\vec c\in\mathbb R^2$, define
$E_{\vec c}^{(\theta)}:=S\cap(S_\theta-\vec c)$, so that
$E_{\vec{0}}^{(\theta)}=\mathcal S_\theta$.
We draw the fixed copy in blue and the rotated copy in red.
With this sign convention, $\vec c$ points from the red centre $-\vec c$ to
the blue centre $\vec{0}$.
The following three steps show how this relative displacement changes when we select
one child from each gasket and restore them to unit size.

\paragraph{Step 1: split into nine pairs.}
For any $\theta$ and $\vec c$, decompose both gaskets into their three
first-level pieces and distribute the intersection over the two unions:
\[
E_{\vec c}^{(\theta)}
=\bigcup_{i,j\in\mathcal I}
\bigl[F_i(S)\cap(\mathbf R_\theta F_j(S)-\vec c)\bigr].
\]
Some of these nine intersections may be empty, and distinct pairs may overlap.
Figure~\ref{fig:carry-nine-pairs} displays all nine choices; we follow the
highlighted pair through the next two steps.

\begin{figure}[H]
\centering
\includegraphics[width=\linewidth]{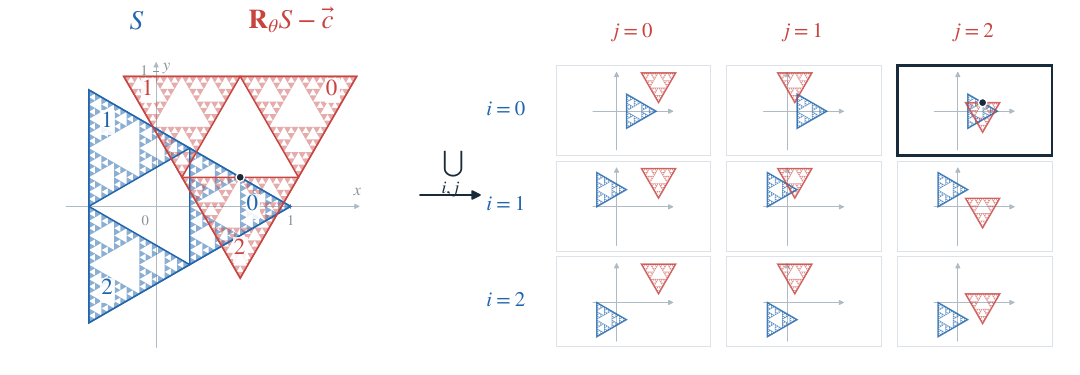}
\caption{The nine first-level pairs.  Rows select the blue piece and columns the
red piece; the marked pair is $(i,j)=(0,2)$.
All three diagrams use $\theta=\pi/6$ and
$\vec c=(-5/8,-(2+\sqrt3)/8)$, with finite approximations to the gaskets.}
\label{fig:carry-nine-pairs}
\end{figure}

\paragraph{Step 2: restore the scale, keeping the angle.}
Fix $i,j\in\mathcal I$ and apply the same affine map
$\mathbf z\mapsto2\mathbf z-\vec p_i$ to both selected pieces.
The blue piece becomes $S$; the red piece becomes a unit-size gasket with
the same rotation angle $\theta$.
Denote its new relative displacement by $\vec c'_{ij}$, so that it is
$\mathbf R_\theta S-\vec c'_{ij}$.
The map is a bijection with inverse $F_i$, hence the original pairwise
intersection is exactly $F_i(E_{\vec c'_{ij}}^{(\theta)})$.

\begin{figure}[H]
\centering
\includegraphics[width=\linewidth]{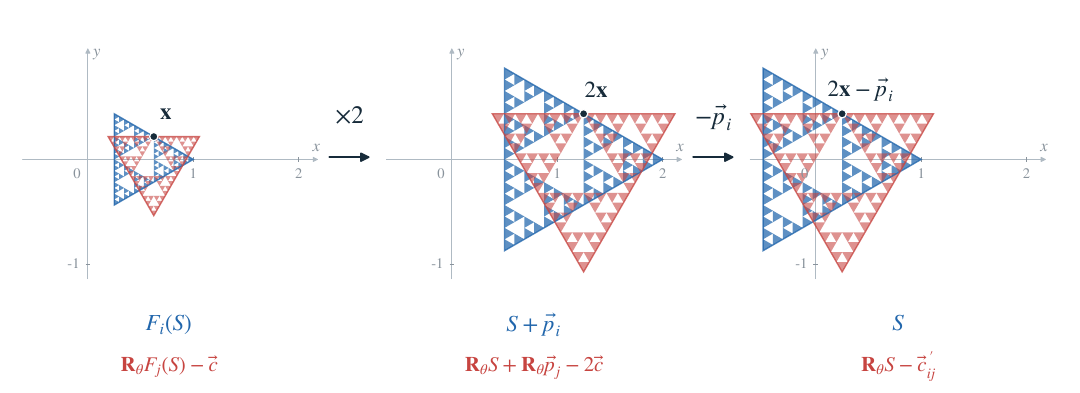}
\caption{A common dilation followed by a common translation.  The black point
tracks the same intersection point, and all panels have the same scale.}
\label{fig:carry-normalisation}
\end{figure}

\paragraph{Step 3: read off the new relative displacement.}
Before rescaling, the selected blue and red centres are $\vec p_i/2$ and
$\mathbf R_\theta\vec p_j/2-\vec c$.
The relative displacement from red to blue can be read along the three arrows in
Figure~\ref{fig:carry-displacement}: from the red child back to its parent,
then to the blue parent, then to the blue child.
These arrows contribute $-\mathbf R_\theta\vec p_j/2$, $\vec c$ and
$\vec p_i/2$, respectively.
The normalisation doubles every relative displacement, giving
\[
\vec c'_{ij}
=2\left(\vec c+\frac{\vec p_i-\mathbf R_\theta\vec p_j}{2}\right)
=2\vec c+\vec p_i-\mathbf R_\theta\vec p_j.
\]

\begin{figure}[H]
\centering
\includegraphics[width=\linewidth]{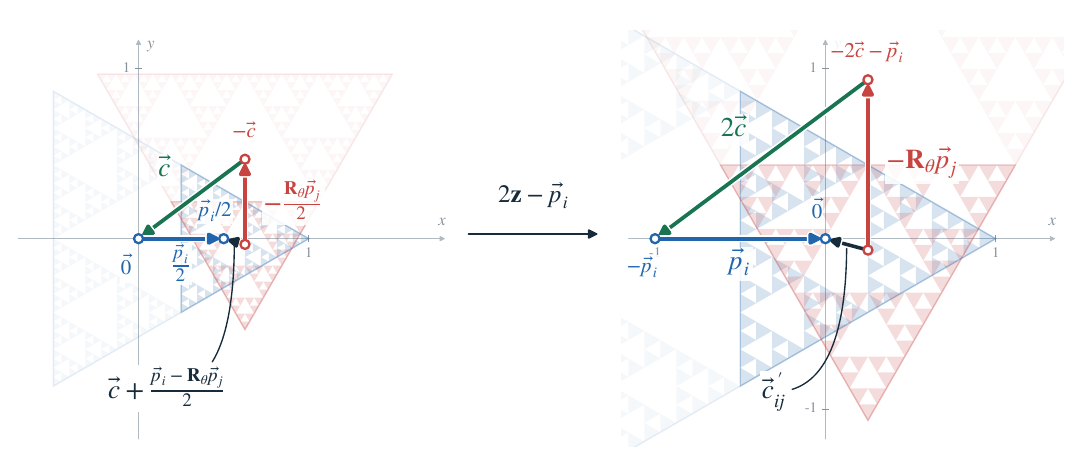}
\caption{The three arrows and their resultant, before and after normalisation.}
\label{fig:carry-displacement}
\end{figure}

As a result, we have the following statement.

\begin{lemma}
\label{lem:carry-recursion-general}
For every $\theta\in\mathbb R$ and $\vec c\in\mathbb R^2$,
\[
E_{\vec c}^{(\theta)}
=\bigcup_{i,j\in\mathcal I}
F_i\Bigl(E_{2\vec c+\vec p_i-\mathbf R_\theta\vec p_j}^{(\theta)}\Bigr).
\]
\end{lemma}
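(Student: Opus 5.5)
The plan is to turn the three informal steps above into a direct set-theoretic computation. First, I will use the self-similarity identity $S=\bigcup_{i}F_i(S)$ for both copies. Since $\mathbf R_\theta$ and translation by $-\vec c$ are bijections, they commute with unions, so
\[
\mathbf R_\theta S-\vec c=\bigcup_{j\in\mathcal I}\bigl(\mathbf R_\theta F_j(S)-\vec c\bigr).
\]
Distributing the intersection over the two finite unions then gives the nine-pair decomposition of Step~1:
\[
E_{\vec c}^{(\theta)}=\bigcup_{i,j}\bigl[F_i(S)\cap(\mathbf R_\theta F_j(S)-\vec c)\bigr].
\]

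It therefore suffices to prove, for each fixed pair $(i,j)$, the identity
\[
F_i(S)\cap(\mathbf R_\theta F_j(S)-\vec c)=F_i\bigl(E^{(\theta)}_{\vec c'_{ij}}\bigr),
\qquad \vec c'_{ij}=2\vec c+\vec p_i-\mathbf R_\theta\vec p_j .
\]
Because $F_i$ is a bijection of the plane, it preserves intersections:
\[
F_i(A\cap B)=F_i(A)\cap F_i(B).
\]
So it is enough to check that
\[
F_i\bigl(\mathbf R_\theta S-\vec c'_{ij}\bigr)=\mathbf R_\theta F_j(S)-\vec c .
\]

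This reduces to a pointwise check. Take a point $\mathbf x=\mathbf R_\theta\mathbf s-\vec c'_{ij}$ with $\mathbf s\in S$. Using linearity of $\mathbf R_\theta$, I will compute
\[
F_i(\mathbf x)=\tfrac12\bigl(\mathbf R_\theta\mathbf s-2\vec c-\vec p_i+\mathbf R_\theta\vec p_j+\vec p_i\bigr)
=\mathbf R_\theta\tfrac{\mathbf s+\vec p_j}{2}-\vec c
=\mathbf R_\theta F_j(\mathbf s)-\vec c .
\]
As $\mathbf s$ ranges over $S$, the left side ranges over $F_i(\mathbf R_\theta S-\vec c'_{ij})$ and the right side over $\mathbf R_\theta F_j(S)-\vec c$. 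This gives equality of the two sets, not merely an inclusion. Taking the union over $i,j$ then finishes the proof.

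I do not expect a real obstacle. The only point needing care is the sign convention in $\vec c'_{ij}$, which the explicit computation confirms. One should also note that the decomposition holds for every $\theta\in\mathbb R$ and every $\vec c$, with no separation hypothesis needed. Overlaps among the nine pieces or empty pieces are harmless, because the statement is an equality of unions rather than a disjoint decomposition.
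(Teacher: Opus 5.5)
Your proposal is correct and is essentially the paper's argument: the paper's proof is just ``by the three steps'' (split into nine pairs, normalise both pieces by $F_i^{-1}$, read off $\vec c'_{ij}=2\vec c+\vec p_i-\mathbf R_\theta\vec p_j$). You make those steps rigorous through the pointwise identity $F_i(\mathbf R_\theta S-\vec c'_{ij})=\mathbf R_\theta F_j(S)-\vec c$, using injectivity of $F_i$ to pull out the intersection, which is exactly what the informal Steps~2 and~3 encode.
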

\begin{proof}
    By the three steps.
\end{proof}

\subsection{Resonant angles}
\label{sec:commensurable}

Up to a common rigid motion, the intersection of two copies of $S$ is
determined by their relative rotation $\theta$ and relative displacement $\vec c$.
The preceding three steps show that each pair of child gaskets, after a common
normalisation, gives an intersection of the same form, with the same relative
rotation $\theta$ and a new relative displacement.
Thus, for fixed $\theta$, we can recover the intersection by following the
normalised relative displacements for all pairs of children and mapping the
resulting intersections back to the original pieces.

For $\mathcal S_\theta$, the two initial centres coincide, so put
$\vec c_0:=\vec{0}$.
If $i_{n+1},j_{n+1}\in\mathcal I$ select the two children at the next level,
Lemma~\ref{lem:carry-recursion-general} gives
\begin{equation}
\vec c_{n+1}
=2\vec c_n+\vec p_{i_{n+1}}-\mathbf R_\theta\vec p_{j_{n+1}},
\qquad n\geq0.
\label{eq:carry-update}
\end{equation}
Iterating this relation yields
\begin{equation}
\vec c_n=\sum_{r=1}^n2^{n-r}
\bigl(\vec p_{i_r}-\mathbf R_\theta\vec p_{j_r}\bigr),
\qquad n\geq1.
\label{eq:carry-closed-form}
\end{equation}
Here $\vec c_n$ is the relative displacement after normalisation, while
$2^{-n}\vec c_n$ is the relative displacement from the red reference centre
to the blue reference centre at the original scale.
A relative displacement $\vec c$ is \emph{reachable} if it equals $\vec c_n$
for some finite pair of address prefixes, and is \emph{live} if
$E_{\vec c}^{(\theta)}\neq\varnothing$.
We say that $\theta$ is \emph{of finite type} if only finitely many live
relative displacements are reachable from $\vec{0}$.

We can decide whether a relative displacement is live by testing membership in a convex polygon.

\begin{lemma}
\label{lem:filling}
For every $\theta$,
\[
S-\mathbf R_\theta S=T-\mathbf R_\theta T,
\qquad
\mathbf R_\theta S-S=\mathbf R_\theta T-T.
\]
\end{lemma}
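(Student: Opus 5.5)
The second identity follows from the first by negation. Indeed $\mathbf R_\theta S-S=-(S-\mathbf R_\theta S)$ and $\mathbf R_\theta T-T=-(T-\mathbf R_\theta T)$, so it suffices to prove $S-\mathbf R_\theta S=T-\mathbf R_\theta T$. The inclusion $\subseteq$ is immediate from $S\subseteq T$. For $\supseteq$ I would use a self-similarity (``filling'') argument in the spirit of Lemma~\ref{lem:carry-recursion-general}.

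Put $D:=S-\mathbf R_\theta S$ and $K:=T-\mathbf R_\theta T$. The set $K$ is compact and convex: it is the Minkowski sum of two triangles, generically a hexagon. Write $v_{ij}:=(\vec p_i-\mathbf R_\theta\vec p_j)/2$ and $\Phi(X):=\bigcup_{i,j\in\mathcal I}(X/2+v_{ij})$. Since $F_i(S)-\mathbf R_\theta F_j(S)=D/2+v_{ij}$, the decomposition $S=\bigcup_iF_i(S)$ gives $D=\Phi(D)$. Here $\Phi$ is the Hutchinson operator of nine contractions of ratio $1/2$, so $D$ is its unique nonempty compact fixed point, and $\Phi^n(X)\to D$ in the Hausdorff metric for every nonempty compact $X$. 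It therefore suffices to prove the covering property
\[
K\subseteq\Phi(K)=\bigcup_{i,j\in\mathcal I}\bigl(F_i(T)-\mathbf R_\theta F_j(T)\bigr).
\]
Iterating this gives $K\subseteq\Phi^n(K)$ for all $n$. Hausdorff limits preserve containment of a fixed compact set, so $K\subseteq D$.

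The main step is the covering $K\subseteq\bigcup_{i,j}(F_iT-\mathbf R_\theta F_jT)$. First I would use $T=F_0T\cup F_1T\cup F_2T\cup M$, where $M=-T/2$ is the central inverted triangle. This reduces the covering to showing $M-\mathbf R_\theta T\subseteq\bigcup_{i,j}(F_iT-\mathbf R_\theta F_jT)$, and symmetrically for $T-\mathbf R_\theta M$. Now fix $x\in K$. The fibre $C_x:=\{(t,s)\in T\times T: t-\mathbf R_\theta s=x\}$ is a nonempty compact convex set. I want to show it meets some product $F_iT\times F_jT$, that is, that $C_x$ is not contained in $(M^\circ\times T)\cup(T\times M^\circ)$, where $M^\circ$ is the open central triangle.

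I would first try an extremal-point argument. Take an extreme point $(t,s)$ of $C_x$. Such a point lies on a face of $T\times T$ of codimension at least two, so at least one of $t,s$ is a vertex of $T$, or both lie on edges. Either way, that coordinate lies in $\partial T\subseteq\bigcup_iF_iT$. The difficulty is that the other coordinate may still land in $M^\circ$. In that case I would slide along the edge of $C_x$ leaving the extreme point until the free coordinate exits $M^\circ$, and check that the pinned coordinate stays on $\partial T$ along the way.

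If this sliding argument becomes messy, I would fall back on a direct support-function comparison. Both $K$ and the nine hexagons $F_iT-\mathbf R_\theta F_jT$ are explicit polygons whose vertices are $\tfrac12$-combinations of $\vec p_i$ and $\mathbf R_\theta\vec p_j$. One can then check the covering by case analysis on $\theta\in[0,\pi/3]$, using the symmetries of $T$, as in the Lean verification. This covering step is where I expect the real work to lie; the rest of the argument is formal.
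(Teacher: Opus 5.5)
\textbf{There is a genuine gap.} Your reduction is correct as far as it goes: $D=\Phi(D)$, and $K\subseteq\Phi(K)$ would give $K\subseteq\Phi^n(K)\to D$, hence $K\subseteq D$. But the covering $K\subseteq\Phi(K)$ carries essentially all the content of the lemma, and you do not prove it.

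In the sliding step you point to the wrong difficulty. Moving along an edge of $C_x$ on which $t\in\partial T$ keeps $t$ there by construction. The real question is why the free coordinate $s$ must ever leave $M^\circ$. Nothing local forces this; a global argument is needed, and the proposal never gives one. The fallback does not close the gap either. A support-function comparison certifies containment in a single convex set, not in a union of nine hexagons. An unspecified case analysis over a continuum of angles is not an argument.

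The missing idea is a size comparison combined with connectedness of $\partial T$. To finish your route, write $t=x+\mathbf R_\theta s$ and suppose your extreme point has $t\in\partial T$ and $s\in M^\circ$; the other cases are already done or symmetric. The open set $U:=x+\mathbf R_\theta M^\circ$ meets the connected set $\partial T$. It cannot contain $\partial T$, since $\operatorname{diam}\partial T=\sqrt3>\sqrt3/2=\operatorname{diam}M$. Hence $\partial T$ meets $\partial U$, giving $t^*\in\partial T$ with $s^*=\mathbf R_\theta^{-1}(t^*-x)\in\partial M$. Both points lie in corner triangles, so $x\in F_iT-\mathbf R_\theta F_jT$ for some $i,j$.

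The paper uses the same principle more directly and needs no iteration. $T$ and $x+\mathbf R_\theta T$ intersect and have equal area, so neither lies strictly inside the other, and their boundaries meet. Since $\partial T\subseteq S$, a common boundary point gives $x\in\partial T-\mathbf R_\theta\partial T\subseteq S-\mathbf R_\theta S$ at once. With that step in hand, your Hutchinson-operator reduction becomes unnecessary.
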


\begin{proof}
The inclusion $S-\mathbf R_\theta S\subseteq T-\mathbf R_\theta T$ is immediate.
For $\vec z\in T-\mathbf R_\theta T$, the triangles $T$ and $\vec z+\mathbf R_\theta T$ intersect.
Their boundaries must meet: otherwise one triangle would lie strictly inside the other, contradicting equality of their areas.
Since $\partial T\subset S$, a common boundary point belongs to both $S$ and $\vec z+\mathbf R_\theta S$.
Thus $\vec z\in S-\mathbf R_\theta S$, proving the first equality; taking negatives gives the second.
The polygon $T-\mathbf R_\theta T$ is convex and contains $\vec{0}$ because both triangles contain their common centroid.
\end{proof}

Thus $E_{\vec c}^{(\theta)}\neq\varnothing$ if and only if $\vec c\in\mathbf R_\theta T-T$.
This closed polygon is the convex hull of the nine points $\mathbf R_\theta\vec p_j-\vec p_i$, with $i,j\in\mathcal I$; points on its boundary are included.
We use this test to construct the labelled graph.

\begin{definition}
\label{def:relative-displacement-matrix}
Fix $\theta\in[0,\pi/3]$ and start from the state $\{\vec{0}\}$.
For each state $\mathcal A$ and label $i\in\mathcal I$, form
\[
\mathcal B:=\bigl\{2\vec c+\vec p_i-\mathbf R_\theta\vec p_j:
\vec c\in\mathcal A,\ j\in\mathcal I\bigr\}
\cap(\mathbf R_\theta T-T).
\]
If $\mathcal B\neq\varnothing$, include the edge
$\mathcal A\xrightarrow{i}\mathcal B$, with similarity $F_i$.
Retain only states reachable from $\{\vec{0}\}$, identifying equal sets.
Define the adjacency matrix $\mathbf A_\theta$ by
\[
(\mathbf A_\theta)_{\mathcal A,\mathcal B}
:=\#\{i\in\mathcal I:\mathcal A\xrightarrow{i}\mathcal B\}.
\]
Thus each state is a finite set of relative displacements, and each label gives
at most one outgoing edge. If $\theta$ is of finite type, this matrix is finite.
\end{definition}

To construct $\mathbf A_\theta$, keep a queue initially containing only $\{\vec{0}\}$.
Remove a state $\mathcal A$ from the queue and, for each label $i$, form $\mathcal B$ by the polygon test in the definition.
Discard an empty $\mathcal B$; otherwise record the edge $\mathcal A\xrightarrow{i}\mathcal B$ and add $\mathcal B$ to the queue if that set has not been encountered before.
Process each state once, stopping when the queue is empty, and count the labels between each pair of states.
States are identified by equality of their displacement sets, not by equality of the intersections they represent.

At a resonant angle, this procedure terminates.
Clearing denominators gives a positive
integer $q$ with $\mathbf R_\theta\boldsymbol\Lambda\subset q^{-1}\boldsymbol\Lambda$, where the vertices generate the Eisenstein lattice $\boldsymbol{\Lambda}:=\mathbb Z\vec p_0+\mathbb Z\vec p_1=\mathbb Z[\omega]$.
The recursion then places every reachable relative displacement in
$q^{-1}\boldsymbol\Lambda$. A live relative displacement also belongs to the
bounded set $\mathbf R_\theta T-T$, so there are only finitely many such
displacements. The states in Definition~\ref{def:relative-displacement-matrix}
are subsets of this finite set, so the queue contains only finitely many distinct states.
Given the rational coordinates of $e^{i\theta}$ in the basis $(1,\omega)$, every step can be performed exactly: in the basis $(\vec p_0,\vec p_1)$, the rotation matrix, polygon vertices and candidate displacements all have rational coordinates.
Polygon membership and equality of states therefore require only rational arithmetic.

\begin{example}
\label{ex:pi-three-matrix}
At $\theta=\pi/3$, Figure~\ref{fig:pi-three-states} shows the intersection represented by each reachable state.
The black corner labels give the intersections after applying $F_i^{-1}$;
the table records their relative displacements, so $\{-\vec p_0,-\vec p_2\}$ represents $E_{-\vec p_0}^{(\pi/3)}\cup E_{-\vec p_2}^{(\pi/3)}$.

\begin{center}
\includegraphics[width=.96\linewidth]{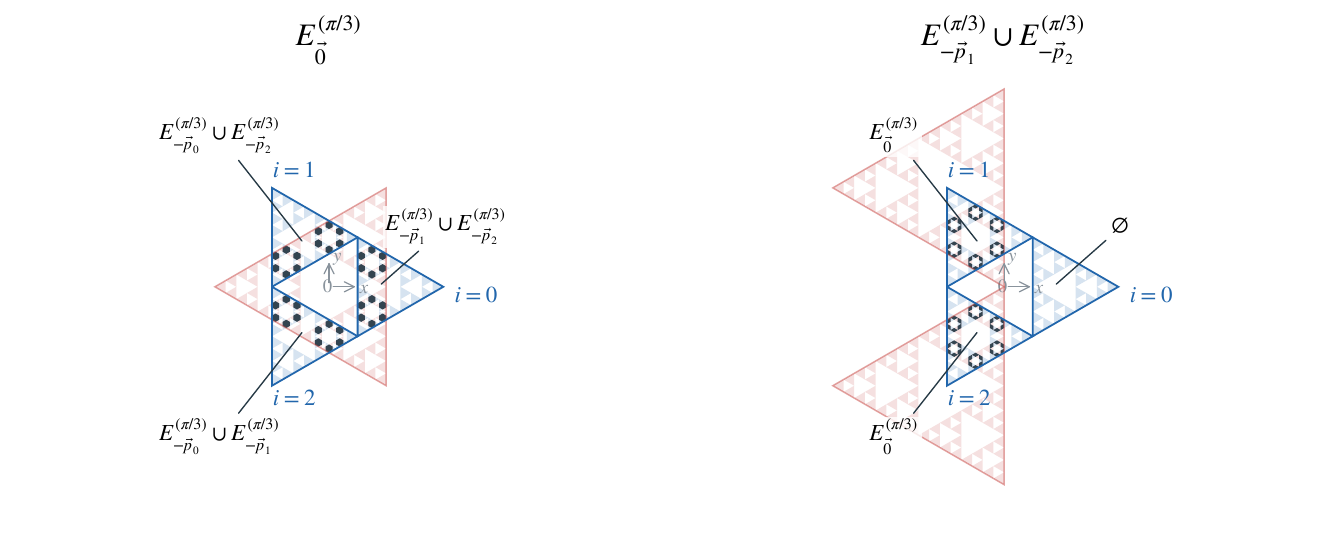}
\end{center}

\begin{figure}[H]
\centering
\includegraphics[width=.96\linewidth]{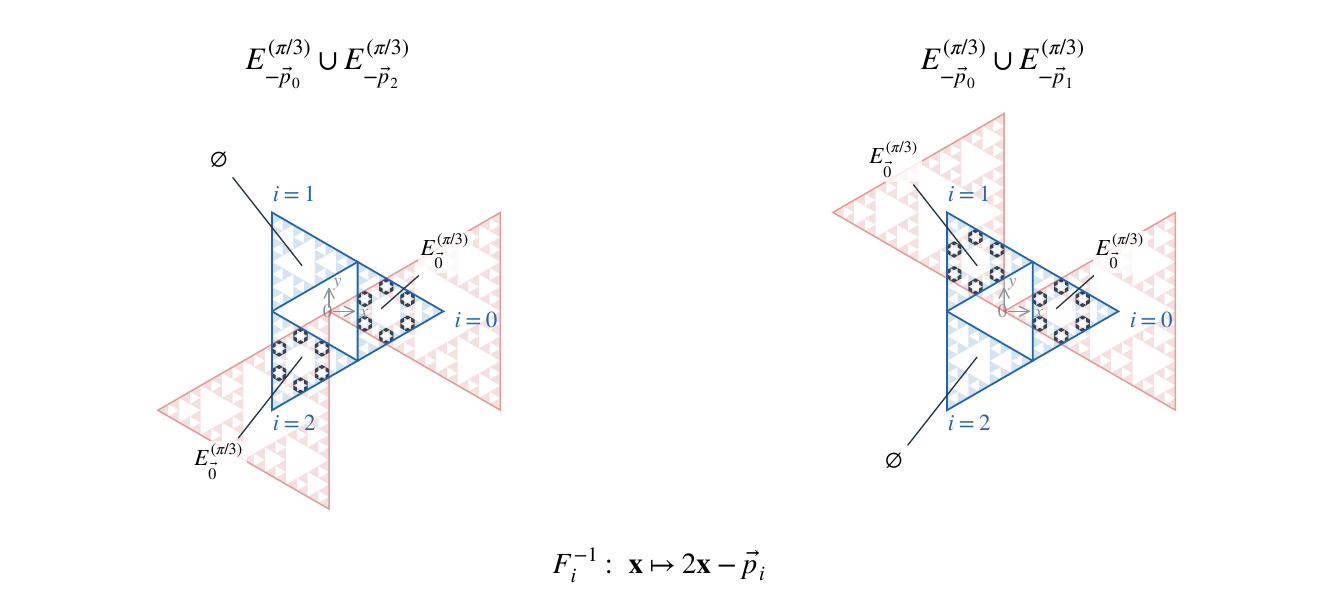}
\caption{The intersections represented by the four states at $\theta=\pi/3$, in row order in the table below.
Blue shows $S$; the full red gaskets are the corresponding copies $\mathbf R_{\pi/3}S-\vec c$, and their intersections with $S$ are dark.
Fourth-level approximations are drawn, with identical coordinates in all panels.
The blue index $i$ selects a corner; its black label identifies the target intersection.}
\label{fig:pi-three-states}
\end{figure}

Within $\mathbf R_{\pi/3}T-T=-2T$, the reachable relative displacements are
$\vec{0},-\vec p_0,-\vec p_1,-\vec p_2$.
Each lies on a directed cycle, so all four are live.
In the table, rows are source states $\mathcal A$, columns are target states $\mathcal B$,
and each cell lists the labels $i$ for which $\mathcal A\xrightarrow{i}\mathcal B$.
Counting the labels in each cell gives the corresponding matrix entry:
\begingroup
\setlength{\arraycolsep}{3pt}
\[
\begin{array}{c|cccc}
\mathcal A\backslash\mathcal B & \{\vec{0}\} & \{-\vec p_1,-\vec p_2\} & \{-\vec p_0,-\vec p_2\} & \{-\vec p_0,-\vec p_1\}\\
\hline
\{\vec{0}\} & \varnothing & \{0\} & \{1\} & \{2\}\\
\{-\vec p_1,-\vec p_2\} & \{1,2\} & \varnothing & \varnothing & \varnothing\\
\{-\vec p_0,-\vec p_2\} & \{0,2\} & \varnothing & \varnothing & \varnothing\\
\{-\vec p_0,-\vec p_1\} & \{0,1\} & \varnothing & \varnothing & \varnothing
\end{array}
\quad\Longrightarrow\quad
\mathbf A_{\pi/3}=
\begin{pmatrix}
0&1&1&1\\
2&0&0&0\\
2&0&0&0\\
2&0&0&0
\end{pmatrix}.
\]
\endgroup
Here $\varnothing$ means there is no edge.
The positive eigenvector $(3,\sqrt6,\sqrt6,\sqrt6)^{\mathrm T}$ gives
$\rho(\mathbf A_{\pi/3})=\sqrt6$.
\end{example}

\begin{theorem}
\label{thm:commensurable-gives-finite-type}
Let $\theta\in[0,\pi/3]$. If
$
e^{i\theta}\in\mathbb Q(\omega),
$
then
\[
\dimhaus(\mathcal S_\theta)=\dimbox(\mathcal S_\theta)
=\frac{\log\rho(\mathbf A_\theta)}{\log2}.
\]
\end{theorem}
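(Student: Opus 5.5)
We will realise $\mathcal S_\theta$ as the first coordinate of a graph-directed attractor built on the finite graph of Definition~\ref{def:relative-displacement-matrix}, and then compute its dimension.

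\textbf{Step 1: the graph is finite.} For a state $\mathcal A$ put $K_{\mathcal A}:=\bigcup_{\vec c\in\mathcal A}E^{(\theta)}_{\vec c}$. By the discussion after Definition~\ref{def:relative-displacement-matrix}, the resonance hypothesis gives an integer $q$ with every reachable displacement in $q^{-1}\boldsymbol\Lambda$. Live displacements lie in the bounded polygon $\mathbf R_\theta T-T$ by Lemma~\ref{lem:filling}. Hence there are finitely many states.

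\textbf{Step 2: the set equations.} Lemma~\ref{lem:carry-recursion-general}, regrouped by the blue label $i$, gives
\[
K_{\mathcal A}=\bigcup_{i:\ \mathcal A\xrightarrow{i}\mathcal B}F_i(K_{\mathcal B}).
\]
Dropped terms are empty by the liveness test of Lemma~\ref{lem:filling}, which is exact including boundary points. The family $(K_{\mathcal A})$ is nonempty and compact, so by uniqueness it is the attractor of this graph-directed IFS. In particular $\mathcal S_\theta=K_{\{\vec0\}}$.

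\textbf{Step 3: the open set condition.} Each edge uses a map $F_i$, and distinct edges out of one state carry distinct labels. Taking the open set $U=\operatorname{int}T$ for every state, the sets $F_i(U)$, $i\in\mathcal I$, are pairwise disjoint and contained in $U$. So the graph-directed OSC holds. This is exactly why displacements were grouped into set-valued states: the nine overlapping pairs become at most three disjoint pieces per state.

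\textbf{Step 4: the dimension formula.} By Mauldin--Williams, every strongly connected component $C$ has dimension $\log\rho(\mathbf A_C)/\log2$, with Hausdorff and box dimensions equal. The set $K_{\{\vec0\}}$ is a finite union of similar copies of the component attractors reachable from $\{\vec0\}$. Since every state is reachable from $\{\vec0\}$, we get
\[
\max_C\rho(\mathbf A_C)=\rho(\mathbf A_\theta).
\]
Hence $\dimhaus(\mathcal S_\theta)=\dimbox(\mathcal S_\theta)=\log\rho(\mathbf A_\theta)/\log 2$.

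The upper bound needs no strong connectivity. There are $\asymp\|\mathbf A_\theta^n\|$ labelled paths of length $n$, which give a cover by $2^{-n}$-triangles, and $\|\mathbf A_\theta^n\|^{1/n}\to\rho$. This controls the upper box dimension.

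\textbf{Main obstacle: the lower bound.} The difficulty is ensuring that a component $C$ realising $\rho$ carries genuinely nonempty sets. Such a component could in principle consist of states whose displacement sets are empty-generating. That cannot happen here: every retained edge leads to a nonempty state $\mathcal B$, every state $\mathcal B$ has an outgoing edge whenever $K_{\mathcal B}\ne\varnothing$, and liveness forces this.

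I would construct a Perron--Frobenius path measure on $C$ by weighting paths with the left and right eigenvectors. Under OSC, a $2^{-n}$-ball meets boundedly many level-$n$ triangles. Each triangle has mass $\lesssim\rho^{-n}$, which gives Frostman's lemma with exponent $\log\rho/\log2$.

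Care is needed when $C$ is a single state with no cycle, where $\rho=0$. This is excluded for the component achieving the maximum, since $\rho(\mathbf A_\theta)\ge1$ as long as some infinite path exists. An infinite path does exist because $\mathcal S_\theta\ni\vec0\neq\varnothing$.
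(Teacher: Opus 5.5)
Your proof is correct and is essentially the paper's argument: finiteness of the states as subsets of $q^{-1}\boldsymbol\Lambda\cap(\mathbf R_\theta T-T)$, the recursion of Lemma~\ref{lem:carry-recursion-general} regrouped by blue label, $\operatorname{int}T$ as the common open set, and Mauldin--Williams combined with reachability of every state. Your treatment of strongly connected components, the path-counting upper bound and the Perron--Frobenius Frostman measure simply spell out what the paper delegates to \cite{MauldinWilliams1988}.

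There are two slips to fix.
\begin{itemize}
\item $\vec0\notin\mathcal S_\theta$, because the origin lies in the removed open middle triangle $-\tfrac12\operatorname{int}T$. The initial state is nonempty for a different reason: the boundaries of the concentric congruent triangles $T$ and $\mathbf R_\theta T$ meet, and $\partial T\subset S$.
\item $K_{\{\vec0\}}$ is in general only a \emph{countable} union of similar copies of component attractors, since a path may circulate in an upstream component arbitrarily long before entering $C$. This does no harm: countable stability handles the Hausdorff dimension, and your path-counting bound already controls the upper box dimension.
\end{itemize}
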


\begin{proof}
The construction following Definition~\ref{def:relative-displacement-matrix} shows that $\mathbf A_\theta$ is finite.

Each state $\mathcal A$ represents
$E_{\mathcal A}^{(\theta)}:=\bigcup_{\vec c\in\mathcal A}E_{\vec c}^{(\theta)}$.
Lemma~\ref{lem:carry-recursion-general} gives its graph-directed decomposition,
with the initial state representing $\mathcal S_\theta$.
These sets are nonempty: the initial gaskets contain the intersecting boundaries
of their centred congruent triangles, and subsequent states contain only live
relative displacements.
Use $\operatorname{int}T$ as the open set at every state. Distinct labels give
disjoint corners $F_i(\operatorname{int}T)\subset\operatorname{int}T$, and each
label occurs at most once at a state. Thus the graph-directed open set condition
holds, and every edge contracts by $1/2$.
All states are reachable from the initial state, so the graph-directed dimension
theorem \cite{MauldinWilliams1988} gives equality of its Hausdorff and box
dimensions, with common value determined by
$\rho(2^{-s}\mathbf A_\theta)=1$.
Since $\rho(2^{-s}\mathbf A_\theta)=2^{-s}\rho(\mathbf A_\theta)$, this value is
$\log\rho(\mathbf A_\theta)/\log2$.
\end{proof}

\begin{example}
\label{thm:pi-three-dimension}
Since $e^{i\pi/3}=1+\omega\in\mathbb Q(\omega)$ and $\rho(\mathbf A_{\pi/3})=\sqrt6$, Theorem~\ref{thm:commensurable-gives-finite-type} gives 
\[
\dimhaus(\mathcal S_{\pi/3})=\dimbox(\mathcal S_{\pi/3})=\log6/\log4.
\]
\end{example}

\begin{figure}[t]
\centering
\includegraphics[width=\linewidth]{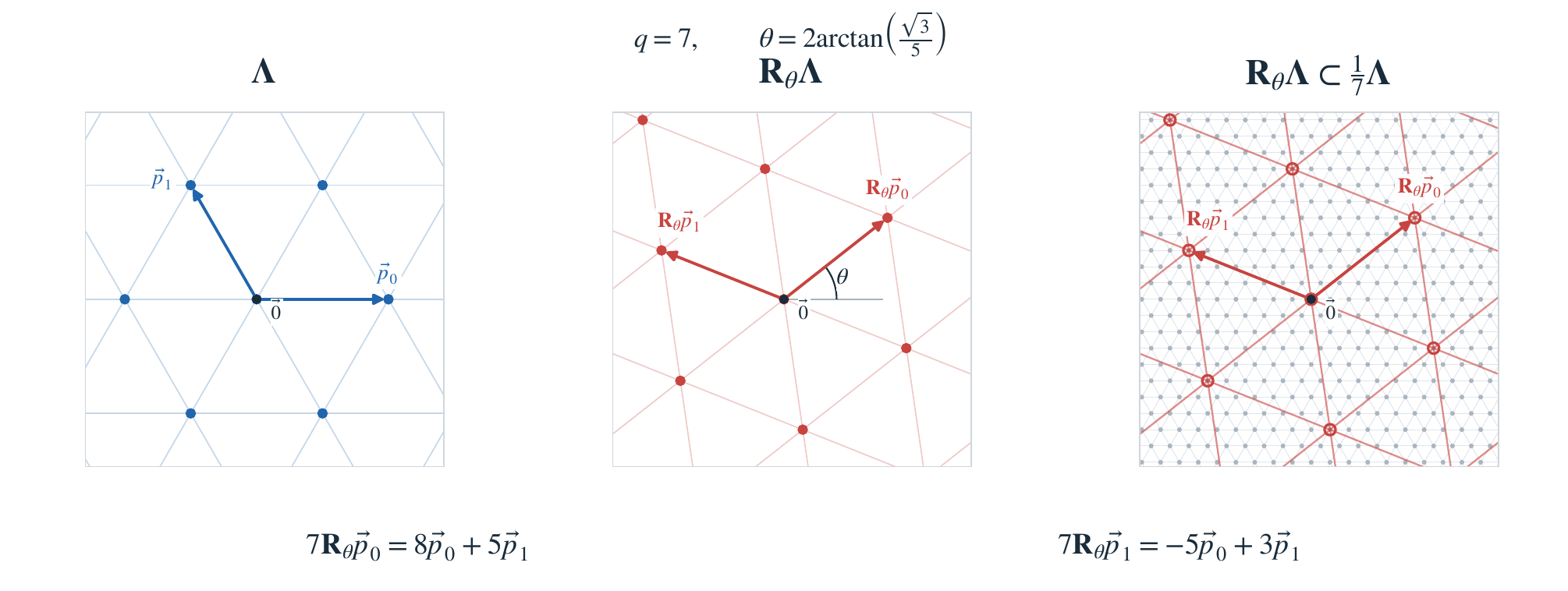}
\caption{A resonant angle with $q=7$ and $\theta=2\arctan(\sqrt3/5)$.
In the right panel, every point of the rotated lattice (red) lies in the finer lattice $7^{-1}\boldsymbol\Lambda$ (grey).
Lines connect neighbouring lattice points.}
\label{fig:commensurable-lattice-inclusion}
\end{figure}

The theorem's hypothesis has the following equivalent forms.

\begin{samepage}
\begin{proposition}
\label{prop:commensurable-equivalences}
Let $\theta\in[0,\pi/3]$. The following four conditions are equivalent:
\begin{enumerate}
\item $e^{i\theta}\in\mathbb Q(\omega)$.
\item $\dfrac1{\sqrt3}\tan\dfrac\theta2\in\mathbb Q$.
\item There exist coprime integers $a,b$ with $a\ge0$ and $0\le2b\le a$
such that $\theta=2\arctan\dfrac{\sqrt3\,b}{2a-b}$.
\item There exists a positive integer $q$ with
$\mathbf R_\theta\boldsymbol\Lambda\subset q^{-1}\boldsymbol\Lambda$
(Figure~\ref{fig:commensurable-lattice-inclusion}).
\end{enumerate}
\end{proposition}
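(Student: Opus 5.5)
I would prove the cycle $(1)\Leftrightarrow(2)$, $(2)\Leftrightarrow(3)$, and $(1)\Leftrightarrow(4)$, working throughout in $\mathbb C$ with $\mathbb Q(\omega)=\mathbb Q(\sqrt{-3})$.

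\textbf{(1)$\Leftrightarrow$(2).} The key identity is the half-angle (Cayley) parametrisation of the unit circle. For $\theta\in[0,\pi/3]$ put $t:=\tan(\theta/2)\in[0,1/\sqrt3]$; then
\[
e^{i\theta}=\frac{1+it}{1-it}=\frac{(1+it)^2}{1+t^2}.
\]
If $t/\sqrt3=r\in\mathbb Q$, then $it=r\sqrt{-3}\in\mathbb Q(\omega)$, so $e^{i\theta}\in\mathbb Q(\omega)$. Conversely, if $z=e^{i\theta}\in\mathbb Q(\omega)$, then $z\neq-1$ on this range, and
\[
it=\frac{z-1}{z+1}\in\mathbb Q(\omega).
\]
Since $it$ is purely imaginary and the elements of $\mathbb Q(\sqrt{-3})$ are $x+y\sqrt{-3}$ with $x,y\in\mathbb Q$, this gives $t=y\sqrt3$ with $y\in\mathbb Q$. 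The only care needed is the fact that the imaginary part of an element of $\mathbb Q(\sqrt{-3})$ lies in $\sqrt3\,\mathbb Q$.

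\textbf{(2)$\Leftrightarrow$(3).} This is a reparametrisation. Given (3), we have $\tan(\theta/2)/\sqrt3=b/(2a-b)\in\mathbb Q$, where $2a-b>0$ unless $a=b=0$; that case is excluded by coprimality together with $2b\le a$ forcing $a\ge1$ when $b\ne0$, and $(a,b)=(1,0)$ gives $\theta=0$. Given (2), write $r=m/n$ in lowest terms with $0\le r\le 1/3$ (because $t\le 1/\sqrt3$). Solve $b/(2a-b)=m/n$ by taking $b=m$ and $2a=n+m$ when $n+m$ is even, or $b=2m$ and $a=n+m$ otherwise. Then check that $\gcd(a,b)=1$ in each parity case, and that $r\le1/3$ translates to $2b\le a$. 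This parity bookkeeping is the fiddliest part of the whole proposition, though it is routine.

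\textbf{(1)$\Leftrightarrow$(4).} Given (4), $e^{i\theta}=\mathbf R_\theta\vec p_0\in q^{-1}\mathbb Z[\omega]\subset\mathbb Q(\omega)$. Conversely, if $e^{i\theta}\in\mathbb Q(\omega)$, write it in the basis $(1,\omega)$ and clear denominators: $e^{i\theta}=\lambda/q$ with $\lambda\in\mathbb Z[\omega]$ and $q\in\mathbb Z_{>0}$. Since rotation is multiplication by $e^{i\theta}$ and $\mathbb Z[\omega]$ is a ring, we get
\[
\mathbf R_\theta\boldsymbol\Lambda=q^{-1}\lambda\,\mathbb Z[\omega]\subset q^{-1}\boldsymbol\Lambda.
\]
This also matches the construction preceding Example~\ref{ex:pi-three-matrix}.
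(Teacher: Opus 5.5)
Your proof is correct, but it is organised differently from the paper's. The paper proves a single cycle (1)$\Rightarrow$(2)$\Rightarrow$(3)$\Rightarrow$(4)$\Rightarrow$(1).

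Its (1)$\Rightarrow$(2) is your Cayley computation in real form, $\frac1{\sqrt3}\tan\frac\theta2=\frac{\sin\theta/\sqrt3}{1+\cos\theta}$. Its (2)$\Rightarrow$(3) takes $(a,b)=(s+r,2r)/\gcd(s+r,2r)$. This is exactly your parity split, since $\gcd(s+r,2r)=\gcd(s+r,2)$ when $\gcd(r,s)=1$.

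The real difference is how you reach (4). The paper goes through (3): it observes that $\arg(a+b\omega)=\theta/2$, hence $e^{i\theta}=(a+b\omega)^2/q$ with $q=a^2-ab+b^2$. You instead prove (1)$\Rightarrow$(4) directly by clearing denominators in the basis $(1,\omega)$, and get (2)$\Rightarrow$(1) from the Cayley map rather than going around the cycle.

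Your version is more self-contained and never computes the argument of $a+b\omega$, at the cost of six implications rather than four. The paper's version produces an explicit denominator, the norm of $a+b\omega$. This is the $q=7$ for $(a,b)=(3,1)$ in Figure~\ref{fig:commensurable-lattice-inclusion}, and it matches the size parameter $N$ in the numerics.

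The two routes rest on one identity. Substitute $t=\sqrt3\,b/(2a-b)$ into your $(1+it)^2/(1+t^2)$, and use $2a-b+i\sqrt3\,b=2(a+b\omega)$ and $(2a-b)^2+3b^2=4(a^2-ab+b^2)$. This gives exactly $(a+b\omega)^2/(a^2-ab+b^2)$.

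One small tidy-up in (3)$\Rightarrow$(2): coprimality alone rules out $(a,b)=(0,0)$. Then $0\le b\le a/2$ gives $2a-b\ge 3a/2>0$, so your extra case remarks are unnecessary.
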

\end{samepage}

\begin{proof}
\noindent\emph{(1) $\Rightarrow$ (2).}
Since $e^{i\theta}\in\mathbb Q(\omega)$, both $\cos\theta$ and $\sin\theta/\sqrt3$ are rational. Hence
\[
\frac1{\sqrt3}\tan\frac\theta2
=\frac{\sin\theta/\sqrt3}{1+\cos\theta}\in\mathbb Q.
\]

\noindent\emph{(2) $\Rightarrow$ (3).}
Write $\tfrac1{\sqrt3}\tan\tfrac\theta2=r/s$ in lowest terms, with $s>0$ and
$0\le3r\le s$. Let $(a,b)$ be the coprime pair obtained by dividing
$(s+r,2r)$ by its greatest common divisor. Then $a\ge0$, $0\le2b\le a$ and
$b/(2a-b)=r/s$, so $\theta=2\arctan\dfrac{\sqrt3\,b}{2a-b}$.

\noindent\emph{(3) $\Rightarrow$ (4).}
Here $\arg(a+b\omega)=\theta/2$. For the positive integer $q:=a^2-ab+b^2$,
\[
e^{i\theta}=\frac{a+b\omega}{a+b\overline\omega}
=\frac{(a+b\omega)^2}{q}.
\]
Multiplication by $(a+b\omega)^2$ maps $\boldsymbol\Lambda$ into itself,
so $\mathbf R_\theta\boldsymbol\Lambda\subset q^{-1}\boldsymbol\Lambda$.

\noindent\emph{(4) $\Rightarrow$ (1).}
Since $\vec p_0=1\in\boldsymbol\Lambda$,
$e^{i\theta}=\mathbf R_\theta\vec p_0\in q^{-1}\boldsymbol\Lambda\subset\mathbb Q(\omega)$.
\end{proof}

\subsection{Finite-type angles are exactly the resonant angles}

A natural question is whether a non-resonant angle can also yield a nonempty finite-type intersection.
We follow the normalised relative displacements along a surviving sequence of child intersections.
Finiteness forces a displacement to repeat, and this repetition makes the rotation map one non-zero lattice vector to another.

\begin{theorem}
\label{thm:finite-type-implies-commensurable}
Let $\theta\in[0,\pi/3]$.
Assume that the initial state $\vec{0}$ is live and that only finitely many live relative displacements are reachable from $\vec{0}$.
Then $e^{i\theta}\in\mathbb Q(\omega)$.
\end{theorem}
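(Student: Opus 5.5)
We build an infinite surviving path through the live reachable displacements, force a repetition using finiteness, and then read off a lattice relation that pins down $e^{i\theta}$.

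\textbf{Step 1: an infinite live path.} Since $\vec c_0=\vec 0$ is live, $E_{\vec 0}^{(\theta)}\neq\varnothing$. By Lemma~\ref{lem:carry-recursion-general}, every live displacement $\vec c$ has at least one pair $(i,j)$ for which $2\vec c+\vec p_i-\mathbf R_\theta\vec p_j$ is again live. Choosing such a pair at each step gives an infinite sequence of address pairs $(i_r,j_r)_{r\ge1}$. Every $\vec c_n$ along it, given by \eqref{eq:carry-update}, is reachable and live. By hypothesis these $\vec c_n$ take only finitely many values, so there are $m<n$ with $\vec c_m=\vec c_n$.

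\textbf{Step 2: the relation from the repetition.} Write $k:=n-m\ge1$. Iterating \eqref{eq:carry-update} from level $m$ gives
\[
\vec c_n=2^k\vec c_m+\sum_{r=m+1}^{n}2^{n-r}\bigl(\vec p_{i_r}-\mathbf R_\theta\vec p_{j_r}\bigr).
\]
Put $\vec u:=\sum 2^{n-r}\vec p_{i_r}$ and $\vec v:=\sum 2^{n-r}\vec p_{j_r}$; both lie in $\boldsymbol\Lambda$. Similarly, by \eqref{eq:carry-closed-form}, $\vec c_m=\vec x-\mathbf R_\theta\vec y$ with $\vec x,\vec y\in\boldsymbol\Lambda$. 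The equality $\vec c_n=\vec c_m$ then becomes
\[
(2^k-1)\vec c_m+\vec u-\mathbf R_\theta\vec v=\vec 0,
\]
which rearranges to
\[
(2^k-1)\vec x+\vec u=\mathbf R_\theta\bigl((2^k-1)\vec y+\vec v\bigr).
\]
So the rotation maps the lattice vector $\vec w:=(2^k-1)\vec y+\vec v$ to the lattice vector $(2^k-1)\vec x+\vec u$.

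\textbf{Step 3: $\vec w\neq\vec 0$ (the main obstacle).} If $\vec w\neq\vec 0$, then $e^{i\theta}=\bigl((2^k-1)\vec x+\vec u\bigr)/\vec w$ is a quotient of two nonzero elements of $\mathbb Z[\omega]$, hence lies in $\mathbb Q(\omega)$. It remains to rule out $\vec w=\vec 0$. Reducing mod $2$ gives $\vec w\equiv\vec y+\vec p_{j_n}\pmod{2\boldsymbol\Lambda}$, because every other term of $\vec v$ carries a factor $2$. This does not immediately exclude $\vec w=\vec 0$, so I plan instead to exploit the freedom of choice in Step 1.

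The degenerate relation says $(2^k-1)\vec y=-\vec v$. If it occurs, the same equation holds with $(2^k-1)\vec x=-\vec u$, so $\vec c_m$ is a fixed point of the $k$-step affine cycle. I will then change the path: at level $m+1$, replace $j_{m+1}$ by another red index $j'$ that keeps the displacement live, and rerun the pigeonhole argument on the new continuation. Two cycles through $\vec c_m$ with different $\vec v$ cannot both have $\vec w=\vec 0$, since $\vec w=\vec 0$ determines $\vec v$ uniquely from $\vec y$ and $k$; one also has to normalise the cycle lengths to a common value for this uniqueness to apply. The required second live branch should come from Lemma~\ref{lem:filling}: the intersection at a live displacement is met by more than one red child unless it meets only a single vertex.

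An alternative route, which avoids the branching, is to use the nonzero sum $\vec u-\mathbf R_\theta\vec v$ over the cycle. If $\vec u\ne\vec 0$ or $\vec v\ne\vec 0$, with $\vec v\neq\vec 0$ (which holds whenever $\sum 2^{n-r}\vec p_{j_r}\ne0$; a $2$-adic argument shows this fails only in degenerate cases), one gets the conclusion directly. The remaining case $\vec c_m=\vec 0$ with $\vec u=\vec v=\vec 0$ should likewise be handled by choosing a different live branch. Either way, this degenerate-case analysis is where I expect the real work to lie.
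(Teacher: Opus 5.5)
Your Steps 1 and 2 are sound and reproduce the paper's relation. Put $V_\ell:=\sum_{r=1}^{\ell}2^{\ell-r}\vec p_{j_r}$. Then $\vec y=V_m$ and $2^k\vec y+\vec v=V_n$, so your $\vec w=(2^k-1)\vec y+\vec v$ is exactly $V_n-V_m$, the red difference in the paper's proof. The gap is Step~3: you never establish $\vec w\neq\vec 0$, and neither fallback plan closes it.

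The branching plan rests on an unproved claim that a live intersection is met by at least two red children. Lemma~\ref{lem:filling} only decides whether a displacement is live; it says nothing about how many children carry the intersection. Near the boundary of $\mathbf R_\theta T-T$ the intersection can be confined to a small corner of a single red child. Your uniqueness step also needs two cycles of equal length through $\vec c_m$, which you leave unarranged. The alternative route conflates $\vec v$ with $\vec w$. Nonvanishing of $\vec v$ is automatic, but the relation reads $\mathbf R_\theta\vec w=(2^k-1)\vec x+\vec u$, so $\vec v\neq\vec 0$ settles only the case $m=0$, where $\vec w=\vec v$.

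The missing idea is to iterate your mod-$2$ computation instead of stopping after one digit. The vertices $\vec p_0,\vec p_1,\vec p_2$ are the three distinct non-zero classes of $\boldsymbol\Lambda/2\boldsymbol\Lambda$, and $V_\ell\equiv\vec p_{j_\ell}\pmod{2\boldsymbol\Lambda}$ for $\ell\ge1$. So an equality $V_n=V_m$ with $m\ge1$ forces $j_n=j_m$. It then follows that
$V_{n-1}=(V_n-\vec p_{j_n})/2=(V_m-\vec p_{j_m})/2=V_{m-1}$.
Repeating this $m$ times gives $V_{n-m}=V_0=\vec 0$. This is impossible, because $n-m\ge1$ and so $V_{n-m}\equiv\vec p_{j_{n-m}}\not\equiv\vec 0$.

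Hence the degenerate case never occurs, no freedom of choice in Step~1 is needed, and $e^{i\theta}=\bigl((2^k-1)\vec x+\vec u\bigr)/\vec w\in\mathbb Q(\omega)$ follows at once. This is precisely the paper's argument. The paper takes the path from the addresses of a point of $\mathcal S_\theta$, but your inductive choice of live children works equally well.
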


\begin{proof}
Choose a point of $E_{\vec{0}}^{(\theta)}$ and addresses $(i_r)_{r\ge1}$ and $(j_r)_{r\ge1}$ for that point in the two gaskets.
The corresponding prefixes give a live relative displacement $\vec c_n$ at every level.
Since only finitely many such displacements are reachable, $\vec c_m=\vec c_n$ for some $0\le m<n$.
Subtracting the two expressions in~\eqref{eq:carry-closed-form} gives
\[
\begin{aligned}
\mathbf R_\theta\left(
\sum_{r=1}^n2^{n-r}\vec p_{j_r}
-\sum_{r=1}^m2^{m-r}\vec p_{j_r}\right)
&=\sum_{r=1}^n2^{n-r}\vec p_{i_r}
-\sum_{r=1}^m2^{m-r}\vec p_{i_r}.
\end{aligned}
\]
Both differences are lattice vectors. We show that the one in parentheses is non-zero.

The vertices $\vec p_0,\vec p_1,\vec p_2$ represent distinct non-zero classes modulo $2\boldsymbol\Lambda$.
Thus equality of two finite address sums determines the same last digit on both sides; subtracting that digit and dividing by $2$ removes it.
Repeating this operation would reduce equality of the two red sums above to a nonempty address sum equal to $\vec{0}$, since $n>m$.
This is impossible: modulo $2\boldsymbol\Lambda$, any nonempty address sum equals its last vertex, which is non-zero.

Under the complex identification, $\mathbf R_\theta$ is multiplication by $e^{i\theta}$ and both lattice vectors belong to $\mathbb Z[\omega]$.
Dividing by the non-zero red difference therefore gives $e^{i\theta}\in\mathbb Q(\omega)$.
\end{proof}

Together with the forward finiteness argument, this gives the following characterisation.

\begin{corollary}
\label{cor:finite-type-commensurable-dichotomy}
An angle $\theta\in[0,\pi/3]$ is of finite type if and only if it is resonant.
These angles form a countable dense subset of $[0,\pi/3]$.
\end{corollary}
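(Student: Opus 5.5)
The corollary has two parts, and I would prove them separately.

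\emph{The equivalence.} For the forward direction, suppose $\theta$ is resonant. By Proposition~\ref{prop:commensurable-equivalences}(4) there is a positive integer $q$ with $\mathbf R_\theta\boldsymbol\Lambda\subset q^{-1}\boldsymbol\Lambda$. Then, by \eqref{eq:carry-closed-form}, every reachable displacement $\vec c_n$ lies in $q^{-1}\boldsymbol\Lambda$. By Lemma~\ref{lem:filling}, every live displacement lies in the bounded polygon $\mathbf R_\theta T-T$. A discrete lattice meets a bounded set in finitely many points, so $\theta$ is of finite type.

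For the converse, suppose $\theta$ is of finite type. The initial state $\vec 0$ is live, because $\vec 0\in\mathbf R_\theta T-T$: both triangles contain their common centroid, as noted at the end of the proof of Lemma~\ref{lem:filling}. Concretely, $\partial T$ and $\mathbf R_\theta\partial T$ meet, so $\mathcal S_\theta\neq\varnothing$. The hypotheses of Theorem~\ref{thm:finite-type-implies-commensurable} therefore hold, and it gives $e^{i\theta}\in\mathbb Q(\omega)$.

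\emph{Countability and density.} By Proposition~\ref{prop:commensurable-equivalences}(2), the resonant set is the preimage of $\mathbb Q$ under $\theta\mapsto\tfrac1{\sqrt3}\tan(\theta/2)$. On $[0,\pi/3]$ this map is a continuous strictly increasing bijection onto $[0,1/3]$. Its inverse $t\mapsto 2\arctan(\sqrt3\,t)$ is continuous and maps $\mathbb Q\cap[0,1/3]$ onto the resonant angles. Since $\mathbb Q\cap[0,1/3]$ is countable and dense in $[0,1/3]$, and a homeomorphism preserves both properties, the resonant angles form a countable dense subset of $[0,\pi/3]$.

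Once these earlier results are available, no step here is a real obstacle. The only point needing care is that the liveness of $\vec 0$ holds automatically, so that Theorem~\ref{thm:finite-type-implies-commensurable} applies without extra hypotheses.
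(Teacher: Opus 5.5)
Your proposal is correct and follows essentially the same route as the paper. You use the lattice-plus-bounded-polygon finiteness argument for resonant $\Rightarrow$ finite type. You use the automatic liveness of $\vec 0$ together with Theorem~\ref{thm:finite-type-implies-commensurable} for the converse; justifying that liveness through the common centroid and Lemma~\ref{lem:filling} is as valid as the paper's boundary-intersection remark. Countability and density come, as in the paper, from the homeomorphism $x\mapsto2\arctan(\sqrt3\,x)$ carrying $\mathbb Q\cap[0,1/3]$ onto the resonant angles.
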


\begin{proof}
The boundaries of the centred congruent triangles $T$ and $\mathbf R_\theta T$ intersect, and $\partial T\subset S$, so the initial state is live for every $\theta$.
The equivalence follows from Theorem~\ref{thm:finite-type-implies-commensurable} and the finiteness argument following Definition~\ref{def:relative-displacement-matrix}.
By Proposition~\ref{prop:commensurable-equivalences}, the resonant angles are precisely the image of $\mathbb Q\cap[0,1/3]$ under the continuous, strictly increasing map $x\mapsto2\arctan(\sqrt3\,x)$.
This image is countable and dense in $[0,\pi/3]$.
\end{proof}

For these angles, Theorem~\ref{thm:commensurable-gives-finite-type} gives the dimension through the finite matrix $\mathbf A_\theta$.
We next turn to a bound that holds for almost every angle.

\section{An upper box bound for almost every angle}
\label{sec:ae-upper}

The finite-type result applies to a countable set of angles.
To treat almost every angle, we count intersecting pairs of small gasket cells through the distribution of the difference between two independent gasket points.
We first establish separation estimates for almost every angle, then use Corso--Shmerkin's $L^q$-dimension formula \cite[Corollary~4.2]{corso2024dynamical}, restated as Theorem~\ref{thm:corso-shmerkin} below, to bound the mass of this distribution near the origin.
This yields the upper bound $\overline{\dim}_{\mathrm B}(\mathcal S_\theta)\le2\dimhaus(S)-2$.

Throughout this section, $\theta\in[0,2\pi)$; periodicity gives the same almost-everywhere statements on $\mathbb R$.
Let $\mu$ be the uniform self-similar probability measure on $S$, so that $\mu=\tfrac13\sum_{i=0}^2(F_i)_*\mu$.

\begin{definition}
The \emph{difference measure} is $\nu_\theta:=\mu*((-\mathbf R_\theta)_*\mu)$, the law of $\vec X-\mathbf R_\theta\vec Y$ for independent $\vec X,\vec Y$ with law $\mu$.
\end{definition}

It is generated by the nine maps
\[
\phi_{ij}^{\theta}(\vec z):=\frac{\vec z+\vec p_i-\mathbf R_\theta\vec p_j}{2},
\qquad i,j\in\mathcal I,
\]
with equal weights $1/9$.
By Lemma~\ref{lem:filling}, the support of $\nu_\theta$ is the compact convex polygon $T-\mathbf R_\theta T$, which contains $\vec{0}$.

\subsection{Separation for almost every angle}

For a homogeneous iterated function system, \emph{exponential separation} means that some $a>0$ gives a lower bound $e^{-an}$ for the distance between cylinder translations corresponding to any two distinct length-$n$ address words, at infinitely many levels $n$.
We will obtain such a bound at every sufficiently large level for the difference system.
The estimates use the lattice $\boldsymbol\Lambda=\mathbb Z[\omega]$: every non-zero lattice vector has length at least $1$, and the number with lattice coordinates bounded by $C2^n$ is $O(4^n)$.

\begin{lemma}
\label{lem:ae-exp-sep}
For Lebesgue-almost every $\theta$, the system $\{\phi_{ij}^{\theta}:i,j\in\mathcal I\}$ satisfies exponential separation.
\end{lemma}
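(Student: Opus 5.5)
Compute the cylinder translations of the difference system. For a word $w=((i_1,j_1),\dots,(i_n,j_n))$, the composition $\phi^\theta_w$ is $\vec z\mapsto 2^{-n}\vec z+t_w(\theta)$ with
\[
t_w(\theta)=\sum_{r=1}^n2^{-r}\bigl(\vec p_{i_r}-\mathbf R_\theta\vec p_{j_r}\bigr)=2^{-n}\bigl(\mathbf a_w-e^{i\theta}\mathbf b_w\bigr),
\]
where $\mathbf a_w=\sum_r2^{n-r}\vec p_{i_r}$ and $\mathbf b_w=\sum_r2^{n-r}\vec p_{j_r}$ lie in $\boldsymbol\Lambda$ and have lattice coordinates bounded by $C2^n$. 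For two distinct words $w\neq w'$ of length $n$, set $\mathbf a=\mathbf a_w-\mathbf a_{w'}$ and $\mathbf b=\mathbf b_w-\mathbf b_{w'}$. The argument in the proof of Theorem~\ref{thm:finite-type-implies-commensurable} (vertices are distinct non-zero classes modulo $2\boldsymbol\Lambda$, so peel off last digits) shows that $\mathbf a_w$ determines $(i_r)$ and $\mathbf b_w$ determines $(j_r)$. Hence $(\mathbf a,\mathbf b)\neq(\vec 0,\vec 0)$. If $\mathbf b=\vec 0$, then $\mathbf a$ is a non-zero lattice vector, so $|t_w-t_{w'}|\ge2^{-n}$ for every $\theta$. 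The only difficulty is when $\mathbf b\neq\vec 0$.

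For $\mathbf b\neq\vec0$, estimate the measure of bad angles with a Borel--Cantelli argument. Fix $a>\log 2$ large, say $a=10$, and let $B_n$ be the set of $\theta\in[0,2\pi)$ for which some pair of distinct words has $|t_w(\theta)-t_{w'}(\theta)|<e^{-an}$. Since the translation difference depends only on $(\mathbf a,\mathbf b)$, $B_n$ is contained in the union over admissible pairs $(\mathbf a,\mathbf b)$ with $\mathbf b\ne\vec0$ of
\[
\{\theta:|\mathbf a-e^{i\theta}\mathbf b|<2^ne^{-an}\}.
\]
Because $\theta\mapsto e^{i\theta}\mathbf b$ traverses a circle of radius $|\mathbf b|\ge1$ at speed $|\mathbf b|$, this set has measure at most $C2^ne^{-an}/|\mathbf b|\le C2^ne^{-an}$. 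It is empty unless $|\mathbf a|\approx|\mathbf b|$. By the counting remark, there are $O(4^n)$ choices of $\mathbf a$ and $O(4^n)$ of $\mathbf b$. Hence $|B_n|\le C\,16^n2^ne^{-an}$, which is summable once $a>\log 32$.

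By Borel--Cantelli, almost every $\theta$ lies in only finitely many $B_n$. For such $\theta$, every sufficiently large $n$ has all distinct length-$n$ cylinder translations at distance at least $e^{-an}$. This is exponential separation at every large level, which is stronger than what the definition requires.

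The main obstacle is the injectivity step: ensuring distinct words never give identical translations identically in $\theta$. This requires $\mathbf b\neq\vec0$, or else $\mathbf a\neq\vec0$ when $\mathbf b=\vec0$. The $2$-adic digit-recovery argument from Theorem~\ref{thm:finite-type-implies-commensurable} handles it. The measure bound is routine. Because the maps are homogeneous (same rotation-free linear part $\tfrac12\mathrm{Id}$), comparing translations suffices.
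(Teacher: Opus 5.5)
Your proposal is correct and follows essentially the same route as the paper: the same reduction of translation differences to $2^{-n}(\vec u-\mathbf R_\theta\vec v)$ with $(\vec u,\vec v)\neq(\vec 0,\vec 0)$ by the modulo-$2\boldsymbol\Lambda$ injectivity, the same arc-length bound for a circle of radius at least $1$ meeting a small disc, and the same $O(16^n)$ pair count fed into the first Borel--Cantelli lemma. The differences are cosmetic. You use the threshold $2^ne^{-an}$ with $a=10$ where the paper uses $16^{-n}n^{-2}$, which yields $e^{-4n}$. You also dispose of the case $\mathbf b=\vec 0$ up front, whereas the paper notes that the bad set is then empty.
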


\begin{proof}
A difference between two level-$n$ cylinder translations has the form $2^{-n}(\vec u-\mathbf R_\theta\vec v)$, where
\[
\vec u:=\sum_{r=1}^n2^{n-r}(\vec p_{i_r}-\vec p_{i'_r}),
\qquad
\vec v:=\sum_{r=1}^n2^{n-r}(\vec p_{j_r}-\vec p_{j'_r}).
\]
These lattice vectors have coordinates bounded by $2^{n+1}$.
The modulo-$2\boldsymbol\Lambda$ argument in the proof of Theorem~\ref{thm:finite-type-implies-commensurable} shows that the address sums are injective.
Consequently, distinct word pairs give $(\vec u,\vec v)\neq(\vec{0},\vec{0})$.

For a fixed such pair and $0<\delta<1/2$, the set of angles satisfying $|\vec u-\mathbf R_\theta\vec v|<\delta$ has length at most $C\delta$, with an absolute constant $C$.
If either vector is zero, this set is empty.
Otherwise, $\mathbf R_\theta\vec v$ traces a circle of radius $|\vec v|\ge1$, and its intersection with the disk $B(\vec u,\delta)$ occupies an angular interval of length at most $2\arcsin(\delta/|\vec v|)\le C\delta$.

Let $B_n$ be the set of angles for which $|\vec u-\mathbf R_\theta\vec v|<16^{-n}n^{-2}$ for some non-zero pair with the level-$n$ coordinate bounds.
There are at most $C16^n$ such pairs, so $|B_n|\le Cn^{-2}$.
Since $\sum_n|B_n|<\infty$, the first Borel--Cantelli lemma implies that almost every angle belongs to only finitely many $B_n$; no independence is required.
For each remaining angle, the translations of any two distinct length-$n$ words are separated by at least
\[
2^{-n}16^{-n}n^{-2}=32^{-n}n^{-2}\ge e^{-4n}
\]
for all sufficiently large $n$.
\end{proof}

We also need separation in at least one factor of each projected convolution.
For $\vec e\in S^1$, write $\pi_{\vec e}(\vec x):=\langle\vec e,\vec x\rangle$ and $\mu_{\vec e}:=(\pi_{\vec e})_*\mu$.
Then $(\pi_{\vec e})_*\nu_\theta=\mu_{\vec e}*((-\mathrm{id})_*\mu_{\mathbf R_\theta^T\vec e})$.

We call $\theta$ a \emph{directional resonance} if $\mathbf R_\theta\vec\eta$ is parallel to $\vec\lambda$ for some non-zero $\vec\lambda,\vec\eta\in\boldsymbol\Lambda$.
Equivalently, $\tan\theta\in\sqrt3\,\mathbb Q\cup\{\infty\}$.
These angles form a countable set, denoted by $\Theta_0$.
This condition is weaker than resonance: for example, $\pi/6\in\Theta_0$, whereas $e^{i\pi/6}\notin\mathbb Q(\omega)$.

\begin{lemma}
\label{lem:ae-dichotomy}
For Lebesgue-almost every $\theta$, every direction $\vec e\in S^1$ has the following property: at least one of $\mu_{\vec e}$ and $\mu_{\mathbf R_\theta^T\vec e}$ satisfies exponential separation.
\end{lemma}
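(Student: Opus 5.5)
The plan is to reduce the statement to a Borel--Cantelli estimate over pairs of lattice vectors, in the spirit of Lemma~\ref{lem:ae-exp-sep}. The key point is that failure of exponential separation for the two projections, in one common direction, forces the angle $\theta$ to be super-exponentially close to an angle between two lattice directions. Only countably many such angles exist at each scale, and there are few of them.

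\emph{Step 1: failure of separation for a projection is a near-orthogonality condition.} The measure $\mu_{\vec e}$ is the self-similar measure of the homogeneous system $x\mapsto (x+\langle\vec e,\vec p_i\rangle)/2$. The difference of the translations of two distinct length-$n$ words is $2^{-n}\langle\vec e,\vec u\rangle$, where $\vec u=\sum_{r}2^{n-r}(\vec p_{i_r}-\vec p_{i'_r})$. By the modulo-$2\boldsymbol\Lambda$ injectivity used in Theorem~\ref{thm:finite-type-implies-commensurable}, $\vec u$ is a non-zero lattice vector, with coordinates bounded by $2^{n+1}$. Fix $a:=10$. If $\mu_{\vec e}$ fails exponential separation, then for every sufficiently large $n$ some such $\vec u$ satisfies $|\langle\vec e,\vec u\rangle|<2^n e^{-an}$. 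Since $|\vec u|\ge1$, this means the angle between $\vec e$ and $\vec u^{\perp}$ is at most $C2^ne^{-an}$.

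\emph{Step 2: both projections failing in the same direction pins down $\theta$.} Suppose that, for some $\vec e$, both $\mu_{\vec e}$ and $\mu_{\mathbf R_\theta^T\vec e}$ fail exponential separation. Then for all large $n$, hence for some arbitrarily large common $n$, there are non-zero lattice vectors $\vec u,\vec v$ of size $O(2^n)$ with:
\begin{itemize}
\item $\vec e$ within angle $C2^ne^{-an}$ of $\vec u^\perp$;
\item $\mathbf R_\theta^T\vec e$ within angle $C2^ne^{-an}$ of $\vec v^\perp$.
\end{itemize}
Hence $\theta$ lies within $2C2^ne^{-an}$ of the angle from $\vec v^\perp$ to $\vec u^\perp$, taken modulo $\pi$.

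Let $B_n$ be the union of these intervals over all such pairs $(\vec u,\vec v)$. There are $O(16^n)$ pairs, so
\[
|B_n|\le C\,16^n\,2^n e^{-an}=C\,(32e^{-a})^n .
\]
This is summable. By the first Borel--Cantelli lemma, almost every $\theta$ lies in only finitely many $B_n$. Such a $\theta$ has the property in the statement for every $\vec e$ simultaneously: a bad direction $\vec e$ would place $\theta$ in $B_n$ for all large $n$. The exceptional set is chosen before $\vec e$, because it is defined by a countable union over lattice pairs; this is what makes the statement uniform in $\vec e$.

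\emph{Main obstacle.} The delicate point is the quantifier bookkeeping in Step~1. The definition of exponential separation only asks for separation at infinitely many levels, so its failure must be read as closeness at \emph{all} large levels, for every exponent $a$. We use this for one fixed large $a$, and then need the two failures to occur at a common level; the ``all large $n$'' reading supplies exactly that. One must also check the degenerate cases: if exact overlaps occur, that is $\langle\vec e,\vec u\rangle=0$, the estimate still applies, and such angles are in any case contained in the countable set $\Theta_0$. One must further check that projected translations at level $n$ really arise from lattice vectors with the stated bounds, including the case of a word pair where only one factor differs. If the constants turn out less favourable, I would simply take $a$ larger; the count $16^n$ is only exponential, so any fixed large $a$ works.
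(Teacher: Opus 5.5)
Your proposal is correct and is essentially the paper's argument. Both use a Borel--Cantelli union bound over the $O(16^n)$ pairs of level-$n$ lattice vectors, showing that for almost every $\theta$ no $\vec\lambda$ and $\mathbf R_\theta\vec\eta$ can lie super-exponentially close to a common line. The paper phrases this as a lower bound $16^{-n}n^{-2}$ on $|\vec\lambda\wedge\mathbf R_\theta\vec\eta|$ after separately excluding $\Theta_0$; you instead put angular neighbourhoods of width $C2^ne^{-an}$ directly into $B_n$, which absorbs $\Theta_0$ into the null $\limsup$ set automatically.
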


\begin{proof}
We first exclude angles at which two lattice vectors become too nearly parallel.
For non-zero $\vec\lambda,\vec\eta\in\boldsymbol\Lambda$, the determinant $\vec\lambda\wedge\mathbf R_\theta\vec\eta$ is a sine function of $\theta$ with amplitude $|\vec\lambda||\vec\eta|\ge1$.
Hence the set where $0<|\vec\lambda\wedge\mathbf R_\theta\vec\eta|<\delta$ has length at most $C\delta$.
At level $n$, there are at most $C16^n$ pairs with lattice coordinates bounded by $2^{n+1}$.
The same summation as in Lemma~\ref{lem:ae-exp-sep}, with threshold $16^{-n}n^{-2}$, shows that for almost every $\theta$, every such pair satisfies
\[
\vec\lambda\wedge\mathbf R_\theta\vec\eta=0
\quad\text{or}\quad
|\vec\lambda\wedge\mathbf R_\theta\vec\eta|\ge16^{-n}n^{-2}
\]
at all sufficiently large levels.
Fix such an angle outside $\Theta_0$, so the zero alternative is excluded.

Suppose that both projected systems fail exponential separation for some $\vec e$.
For every $A>0$ and all sufficiently large $n$, their cylinder differences then give non-zero lattice vectors $\vec\lambda_n,\vec\eta_n$, with coordinates bounded by $2^{n+1}$, such that
\[
|\langle\vec e,\vec\lambda_n\rangle|\le e^{-An},
\qquad
|\langle\vec e,\mathbf R_\theta\vec\eta_n\rangle|\le e^{-An}.
\]
Here multiplication of the cylinder differences by $2^n$ is absorbed by choosing a larger exponential rate before rescaling.
Both vectors lie within distance $e^{-An}$ of the line perpendicular to $\vec e$, and their lengths are at most $C2^n$.
Their determinant therefore satisfies
\[
16^{-n}n^{-2}
\le |\vec\lambda_n\wedge\mathbf R_\theta\vec\eta_n|
\le C2^ne^{-An}.
\]
Taking $A=4$ gives $n^{-2}\le C(32e^{-4})^n$, which is impossible for all large $n$.
The exceptional angle set was chosen before $\vec e$, so the conclusion holds simultaneously in every direction.
\end{proof}

\subsection{From measure dimension to intersection dimension}

For a compactly supported probability measure $\sigma$ on $\mathbb R^d$ and $q>1$, its $L^q$-dimension is
\[
D_q(\sigma):=\liminf_{n\to\infty}
\frac{-\log\sum_{Q\in\mathcal D_n}\sigma(Q)^q}{(q-1)n\log2},
\]
where $\mathcal D_n$ is the partition into half-open dyadic cubes of side $2^{-n}$.
This is \cite[Definition~1.3]{corso2024dynamical}, written with natural logarithms.
The ambient bound is $0\le D_q(\sigma)\le d$; see \cite[Section~1.2, inequality~(1.2)]{corso2024dynamical} after rescaling the compact support.
In particular, $D_q(\sigma)\le1$ on the line and $D_q(\sigma)\le2$ in the plane; the line bound is also stated in \cite[Section~1, p.~2, before~(1.1)]{rossi2018measures}.
We say that $\sigma$ is \emph{$q$-unsaturated on lines} if $D_q(\sigma)<D_q(\pi_*\sigma)+1$ for every orthogonal projection $\pi$ onto a $(d-1)$-dimensional subspace.
In dimension one, this condition is simply $D_q(\sigma)<1$.

We use the following specialisation of Corso--Shmerkin's result to systems whose common orthogonal part is the identity.

\begin{theorem}[Corso--Shmerkin, {\cite[Corollary~4.2]{corso2024dynamical}}]
\label{thm:corso-shmerkin}
Let $\sigma$ be a self-similar probability measure on $\mathbb R^d$ generated by finitely many maps $\vec x\mapsto\lambda\vec x+\vec a_i$, with $0<\lambda<1$ and positive weights $p_i$ summing to $1$.
If the system satisfies exponential separation and $\sigma$ is $q$-unsaturated on lines for some $q>1$, then
\[
D_q(\sigma)=\frac{\log\sum_i p_i^q}{(q-1)\log\lambda}.
\]
\end{theorem}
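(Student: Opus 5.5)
The plan is to deduce the statement directly from \cite[Corollary~4.2]{corso2024dynamical} by checking that our setting is a special case of theirs and that our normalisations agree with theirs; no new analysis is intended. The first step is to place $\sigma$ in their framework. Corso--Shmerkin treat self-similar measures generated by maps $\vec x\mapsto\lambda_i U_i\vec x+\vec a_i$ with orthogonal parts $U_i$, in the more general dynamically driven setting. A homogeneous system $\vec x\mapsto\lambda\vec x+\vec a_i$ with positive weights $p_i$ is the degenerate case in which every $\lambda_i=\lambda$ and every $U_i$ is the identity. The driving dynamics is then trivial, and the relevant group generated by the orthogonal parts is $\{\mathrm{id}\}$. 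I would record explicitly that their hypothesis on the rotation group imposes no extra condition in this case.

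The second step is to match the hypotheses one by one.
\begin{itemize}
\item \emph{Exponential separation.} Our notion, a lower bound $e^{-an}$ between distinct level-$n$ cylinder translations at infinitely many $n$, is exactly their notion for a homogeneous system with identity linear part. Two distinct words of the same length have cylinder maps that differ only in translation.
\item \emph{Unsaturation.} In their corollary this is required for the subspaces relevant to the orthogonal group. Once that group is trivial, the condition reduces to $D_q(\sigma)<D_q(\pi_*\sigma)+1$ for every orthogonal projection $\pi$ onto a hyperplane. This is our definition of $q$-unsaturated on lines. In dimension one the projection is onto a point, so $D_q(\pi_*\sigma)=0$ and the condition reads $D_q(\sigma)<1$.
\item \emph{$L^q$-dimension.} Their definition \cite[Definition~1.3]{corso2024dynamical} uses dyadic cubes, possibly with a different logarithm base. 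Changing the base multiplies numerator and denominator by the same constant, so the two quantities coincide.
\end{itemize}

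The third step is to identify the value. Their conclusion under these hypotheses is
\[
D_q(\sigma)=\min\Bigl\{\frac{\log\sum_ip_i^q}{(q-1)\log\lambda},\,d\Bigr\}.
\]
I would remove the minimum using unsaturation. Suppose the minimum were attained by $d$, so that $D_q(\sigma)=d$. By the ambient bound \cite[Section~1.2]{corso2024dynamical}, applied to the projection onto a $(d-1)$-dimensional space, $D_q(\pi_*\sigma)\le d-1$. Then $D_q(\sigma)\ge D_q(\pi_*\sigma)+1$, which contradicts unsaturation. Hence $D_q(\sigma)$ equals the similarity-type expression. This covers both the planar and the linear specialisations used later.

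I expect the main obstacle to be bookkeeping rather than mathematics. One must make sure that Corollary~4.2 really is stated with the minimum against $d$; if instead it is stated with the unsaturation assumption already built in, that step becomes immediate. One must also confirm that their separation condition, which in the dynamical setting is phrased along orbits, collapses to the infinitely-many-levels version above when the dynamics is trivial. I would settle both points by reading the precise statement and recording the dictionary between the two sets of definitions. This is consistent with the Lean development, where the result is declared as an external axiom.
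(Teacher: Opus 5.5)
Your proposal is correct and matches the paper. The paper gives no proof: it quotes the statement as the specialisation of \cite[Corollary~4.2]{corso2024dynamical} to systems whose common orthogonal part is the identity, and the Lean development takes it as an external axiom. Your extra bookkeeping step, which removes a possible $\min\{\cdot,d\}$ using unsaturation and the ambient bound $D_q(\pi_*\sigma)\le d-1$, is sound.
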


We apply this result with $\lambda=1/2$: in dimension one, to a gasket projection with three weights $1/3$; in dimension two, to $\nu_\theta$ with nine weights $1/9$.
Their symbolic dimensions are $\log3/\log2>1$ and $\log9/\log2>2$, respectively.
We apply the theorem by contradiction: a dimension drop would give unsaturation, forcing the dimension above that of the ambient space.

We also use the convolution inequality in Rossi--Shmerkin \cite[Section~1, equation~(1.2)]{rossi2018measures}: for compactly supported probability measures $\sigma,\tau$ on $\mathbb R$ and $q>1$,
\[
D_q(\sigma*\tau)\ge\max\{D_q(\sigma),D_q(\tau)\}.
\]
The cited equation gives the bound by the first factor; commutativity gives the bound by the second.

\begin{lemma}
\label{lem:line-lq}
Suppose $\theta$ satisfies the conclusion of Lemma~\ref{lem:ae-dichotomy}.
Then $D_q((\pi_{\vec e})_*\nu_\theta)=1$ for every $\vec e\in S^1$ and every $q>1$.
\end{lemma}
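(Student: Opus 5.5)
Fix $\vec e\in S^1$ and $q>1$. The upper bound $D_q((\pi_{\vec e})_*\nu_\theta)\le1$ is the one-dimensional ambient bound, since $(\pi_{\vec e})_*\nu_\theta$ is a compactly supported probability measure on $\mathbb R$. So the plan is to prove the lower bound. Use the identity $(\pi_{\vec e})_*\nu_\theta=\mu_{\vec e}*((-\mathrm{id})_*\mu_{\mathbf R_\theta^T\vec e})$. The Rossi--Shmerkin convolution inequality then gives
\[
D_q\bigl((\pi_{\vec e})_*\nu_\theta\bigr)\ge\max\bigl\{D_q(\mu_{\vec e}),\,D_q\bigl((-\mathrm{id})_*\mu_{\mathbf R_\theta^T\vec e}\bigr)\bigr\}.
\]
Reflection $x\mapsto-x$ maps dyadic intervals of side $2^{-n}$ to half-open intervals of the same side. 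Each such interval meets at most two dyadic intervals, so the $L^q$ sums change by at most a bounded factor. Hence $D_q$ is invariant under reflection, and the second term equals $D_q(\mu_{\mathbf R_\theta^T\vec e})$. Because $\theta$ satisfies the conclusion of Lemma~\ref{lem:ae-dichotomy}, it now suffices to show: if a projection $\mu_{\vec f}$ satisfies exponential separation, then $D_q(\mu_{\vec f})=1$.

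For that step, note that $\mu_{\vec f}$ is the self-similar measure on $\mathbb R$ generated by the three maps $x\mapsto x/2+\langle\vec f,\vec p_i\rangle/2$ with weights $1/3$. These are homogeneous with ratio $\lambda=1/2$ and orthogonal part the identity, so Theorem~\ref{thm:corso-shmerkin} applies once exponential separation is given. Argue by contradiction: suppose $D_q(\mu_{\vec f})<1$. In dimension one this is exactly $q$-unsaturation on lines. Theorem~\ref{thm:corso-shmerkin} then gives
\[
D_q(\mu_{\vec f})=\frac{\log(3\cdot3^{-q})}{(q-1)\log(1/2)}=\frac{\log3}{\log2}>1,
\]
which contradicts the ambient bound $D_q\le1$. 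Hence $D_q(\mu_{\vec f})\ge1$, and with the ambient bound, $D_q(\mu_{\vec f})=1$.

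Combining the two steps, at least one of $D_q(\mu_{\vec e})$ and $D_q(\mu_{\mathbf R_\theta^T\vec e})$ equals $1$, so $D_q((\pi_{\vec e})_*\nu_\theta)\ge1$, and equality follows from the ambient bound. The argument works for every $\vec e$ and $q$ simultaneously, since the angle was fixed beforehand.

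The main obstacle is the bookkeeping around the black boxes rather than any estimate. First, one must check that the exponential separation notion delivered by Lemma~\ref{lem:ae-dichotomy} matches the hypothesis of Theorem~\ref{thm:corso-shmerkin}. This includes the degenerate case where distinct words give equal projected translations, which exponential separation must exclude; for good directions the lemma already provides this. Second, one must justify the reflection invariance of $D_q$ carefully with half-open intervals.
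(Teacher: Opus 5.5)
Your proposal is correct and follows the paper's proof essentially step for step. It uses the same four ingredients: the Rossi--Shmerkin convolution inequality applied to $\mu_{\vec e}$ and the reflected $\mu_{\mathbf R_\theta^T\vec e}$; reflection invariance of $D_q$ via bounded overlap of dyadic intervals; the contradiction with Theorem~\ref{thm:corso-shmerkin} (unsaturation would force dimension $\log3/\log2>1$) for whichever factor is exponentially separated; and the one-dimensional ambient bound for the reverse inequality.
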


\begin{proof}
At least one of the two factors $\mu_{\vec e}$ and $\mu_{\mathbf R_\theta^T\vec e}$ satisfies exponential separation.
If its $L^q$-dimension were less than $1$, it would be $q$-unsaturated, and Theorem~\ref{thm:corso-shmerkin} would give dimension $\log3/\log2>1$.
Thus that factor has $L^q$-dimension $1$.
Reflection preserves $L^q$-dimension: reflected dyadic intervals and the original dyadic intervals have bounded overlap in both directions, so their $q$-moment sums differ by at most a factor depending only on $q$.
Applying the convolution inequality above to $\mu_{\vec e}$ and $(-\mathrm{id})_*\mu_{\mathbf R_\theta^T\vec e}$ gives $D_q((\pi_{\vec e})_*\nu_\theta)\ge1$; the line ambient bound gives the reverse inequality.
\end{proof}

\begin{proposition}
\label{prop:nu-lq}
Suppose $\theta$ satisfies the conclusions of Lemmas~\ref{lem:ae-exp-sep} and~\ref{lem:ae-dichotomy}.
Then $D_q(\nu_\theta)=2$ for every $q>1$.
\end{proposition}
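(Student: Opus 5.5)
The argument is by contradiction through Theorem~\ref{thm:corso-shmerkin}, applied in dimension $d=2$ to $\nu_\theta$. This measure is self-similar, generated by the nine maps $\phi^\theta_{ij}$ with common ratio $\lambda=1/2$, trivial orthogonal part and equal weights $1/9$. By Lemma~\ref{lem:ae-exp-sep}, the system $\{\phi^\theta_{ij}\}$ satisfies exponential separation at our $\theta$. Fix $q>1$ and suppose that $D_q(\nu_\theta)<2$.

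First we check that $\nu_\theta$ is $q$-unsaturated on lines. In the plane, the orthogonal projections onto $1$-dimensional subspaces are exactly the maps $\pi_{\vec e}$ with $\vec e\in S^1$, up to an isometric identification of the line with $\mathbb R$. An isometry of $\mathbb R$ preserves $D_q$ up to the bounded-overlap argument already used for reflections in Lemma~\ref{lem:line-lq}. By Lemma~\ref{lem:line-lq}, whose hypothesis holds because $\theta$ satisfies the conclusion of Lemma~\ref{lem:ae-dichotomy}, every projection satisfies $D_q((\pi_{\vec e})_*\nu_\theta)=1$. Therefore, for every projection $\pi$,
\[
D_q(\nu_\theta)<2=D_q(\pi_*\nu_\theta)+1.
\]
This is precisely $q$-unsaturation on lines. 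One point needs care: the projection in Corso--Shmerkin's definition should be the orthogonal projection onto the subspace $\vec e\,\mathbb R$, not merely the functional $\pi_{\vec e}$. The two differ by an isometry of the line, so $D_q$ is unchanged, possibly after a translation, which again changes the dyadic sums only by a bounded factor.

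Theorem~\ref{thm:corso-shmerkin} now gives
\[
D_q(\nu_\theta)=\frac{\log\bigl(9\cdot 9^{-q}\bigr)}{(q-1)\log(1/2)}=\frac{(q-1)\log9}{(q-1)\log2}=\frac{\log9}{\log2}>2.
\]
This contradicts the planar ambient bound $D_q(\nu_\theta)\le2$, so $D_q(\nu_\theta)\ge2$. The ambient bound supplies the reverse inequality, hence $D_q(\nu_\theta)=2$. Since $q>1$ was arbitrary, the statement holds for every $q>1$.

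The substantive input is entirely in Lemma~\ref{lem:line-lq}, which provides unsaturation in every direction simultaneously. What remains is bookkeeping, and the main point to watch is making the notion of ``projection'' in the unsaturation hypothesis match the one-dimensional measures $(\pi_{\vec e})_*\nu_\theta$ controlled by that lemma.
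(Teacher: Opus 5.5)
Your proposal is correct and follows the paper's proof essentially verbatim. You assume $D_q(\nu_\theta)<2$, obtain $q$-unsaturation on lines from Lemma~\ref{lem:line-lq}, and combine it with exponential separation from Lemma~\ref{lem:ae-exp-sep} to get $D_q(\nu_\theta)=\log9/\log2>2$ via Theorem~\ref{thm:corso-shmerkin}; the contradiction and the planar ambient bound then give $D_q(\nu_\theta)=2$. Your extra remark identifying the functional $\pi_{\vec e}$ with orthogonal projection onto $\vec e\,\mathbb R$ up to an isometry is harmless bookkeeping that the paper leaves implicit.
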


\begin{proof}
Fix $q>1$ and suppose $D_q(\nu_\theta)<2$.
Lemma~\ref{lem:line-lq} gives $D_q((\pi_{\vec e})_*\nu_\theta)=1$ in every direction, so $\nu_\theta$ is $q$-unsaturated on lines.
Together with exponential separation, Theorem~\ref{thm:corso-shmerkin} then gives
\[
D_q(\nu_\theta)
=\frac{\log\bigl(9(1/9)^q\bigr)}{(q-1)\log(1/2)}
=\frac{\log9}{\log2}>2,
\]
a contradiction.
\end{proof}

Full $L^q$-dimension controls the mass near the origin.
Each intersecting pair of level-$n$ cells contributes probability $9^{-n}$ to a ball of radius comparable to $2^{-n}$, which turns this mass estimate into a covering bound.

\begin{theorem}
\label{thm:ae-upper}
For Lebesgue-almost every $\theta$,
\[
\overline{\dim}_{\mathrm B}(\mathcal S_\theta)\le2\dimhaus(S)-2.
\]
\end{theorem}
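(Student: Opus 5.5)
We fix $\theta$ in the full-measure set where Lemmas~\ref{lem:ae-exp-sep} and~\ref{lem:ae-dichotomy} hold, so that Proposition~\ref{prop:nu-lq} gives $D_q(\nu_\theta)=2$ for every $q>1$. The plan is to cover $\mathcal S_\theta$ at scale $2^{-n}$ by pairs of level-$n$ cells, bound the number of such pairs by the $\nu_\theta$-mass near the origin, and control that mass with the $L^q$-dimension.

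\emph{Step 1: covering by intersecting pairs.} For words $\mathbf i,\mathbf j\in\mathcal I^n$, the cells $F_{\mathbf i}(S)$ and $\mathbf R_\theta F_{\mathbf j}(S)$ have diameter about $2^{-n}$. Every point of $\mathcal S_\theta$ lies in some pair with $F_{\mathbf i}(S)\cap\mathbf R_\theta F_{\mathbf j}(S)\neq\varnothing$, and each such pair is covered by one ball of radius $C2^{-n}$. Hence $N_{2^{-n}}(\mathcal S_\theta)\lesssim P_n$, where $P_n$ is the number of intersecting pairs. If a pair intersects, then the difference of the two cell centres has length at most $C2^{-n}$. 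The difference measure $\nu_\theta$ is the law of $\vec X-\mathbf R_\theta\vec Y$. It gives mass $9^{-n}$ to each pair $(\mathbf i,\mathbf j)$, and that mass is supported within distance $C'2^{-n}$ of the centre difference. So every intersecting pair puts mass $9^{-n}$ inside the ball $B(\vec 0,C''2^{-n})$. Since the pairs are distinct events in the product space, summing gives
\[
P_n\,9^{-n}\le\nu_\theta\bigl(B(\vec 0,C''2^{-n})\bigr).
\]

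\emph{Step 2: mass near the origin from $L^q$-dimension.} The ball $B(\vec 0,C''2^{-n})$ meets at most $K$ dyadic squares of $\mathcal D_n$, with $K$ absolute. Let $\varepsilon>0$ and $q>1$. Since $D_q(\nu_\theta)=2$, for all large $n$ we have $\sum_{Q\in\mathcal D_n}\nu_\theta(Q)^q\le 2^{-(q-1)n(2-\varepsilon)}$. In particular each single square satisfies $\nu_\theta(Q)\le 2^{-(1-1/q)n(2-\varepsilon)}$. Therefore
\[
\nu_\theta(B(\vec 0,C''2^{-n}))\le K\,2^{-n(2-\varepsilon)(q-1)/q}.
\]

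\emph{Step 3: conclusion.} Combining the two steps,
\[
N_{2^{-n}}(\mathcal S_\theta)\lesssim 9^n\,2^{-n(2-\varepsilon)(q-1)/q},
\]
so
\[
\overline{\dim}_{\mathrm B}(\mathcal S_\theta)\le \log9/\log2-(2-\varepsilon)(1-1/q).
\]
Let $q\to\infty$ and $\varepsilon\to0$. This is legitimate because $D_q=2$ holds for every $q$ at the fixed angle $\theta$, and for each fixed pair $(q,\varepsilon)$ the estimate only needs $n$ large. The limit gives $2\log3/\log2-2=2\dimhaus(S)-2$. Box dimension along the dyadic scales $2^{-n}$ suffices, since $N_r$ is monotone in $r$.

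The main obstacle is the comparison in Step 1 between the number of intersecting pairs and the $\nu_\theta$-mass near $\vec 0$. We must check that each pair's mass $9^{-n}$ really lands in a ball of radius $O(2^{-n})$, which follows because both cells have diameter $2^{-n}$. We must also check that distinct pairs are counted additively, which holds because cylinder events in the product space are disjoint. Overlaps of cells as sets cause no trouble, because we count measure on the product space rather than on the plane. Passing from the $q$-moment sum to a single-square bound loses only the factor $1/q$, and this loss disappears as $q\to\infty$.
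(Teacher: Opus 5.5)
Your proposal is correct and follows the paper's proof essentially step for step. Both use the symbolic-disjointness bound $9^{-n}Q_n(\theta)\le\nu_\theta\bigl(\overline B(\vec 0,C2^{-n})\bigr)$, the single-square estimate $\sup_Q\nu_\theta(Q)\le\bigl(\sum_Q\nu_\theta(Q)^q\bigr)^{1/q}$ fed by $D_q(\nu_\theta)=2$ from Proposition~\ref{prop:nu-lq}, the bounded number of dyadic squares meeting the ball, and the limits $q\to\infty$, $\varepsilon\to0$; the only differences are cosmetic, such as writing the exponent as $(2-\varepsilon)(1-1/q)$ instead of $2(1-1/q)-\varepsilon$ with a constant $C_{\theta,q,\varepsilon}$.
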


\begin{proof}
Fix an angle satisfying the two separation lemmas.
Let $Q_n(\theta)$ be the number of ordered pairs of length-$n$ addresses whose gasket cells intersect after rotation.
Choose two independent uniform symbolic addresses.
Each prescribed pair of prefixes has probability $9^{-n}$.
If its two cells intersect, every pair of points in them is at distance at most $2\operatorname{diam}(S)2^{-n}$.
The prefix events are disjoint in the symbolic probability space, even when geometric cells share boundary points; consequently,
\[
9^{-n}Q_n(\theta)
\le\nu_\theta\bigl(\overline B(\vec{0},2\operatorname{diam}(S)2^{-n})\bigr).
\]

Fix $q>1$ and $\varepsilon>0$.
By Proposition~\ref{prop:nu-lq} and the definition of $L^q$-dimension,
\[
\sup_{Q\in\mathcal D_n}\nu_\theta(Q)
\le\left(\sum_{Q\in\mathcal D_n}\nu_\theta(Q)^q\right)^{1/q}
\le C_{\theta,q,\varepsilon}\,2^{-n(2(1-1/q)-\varepsilon)}.
\]
The ball above meets only a bounded number of level-$n$ dyadic squares.
The intersection is covered by at most $Q_n(\theta)$ blue level-$n$ cells, each of diameter $\operatorname{diam}(S)2^{-n}$.
Combining these covering observations with the two estimates gives
\[
\overline{\dim}_{\mathrm B}(\mathcal S_\theta)
\le\frac{\log9}{\log2}-2+\frac2q+\varepsilon.
\]
Since this holds for every $q>1$ and $\varepsilon>0$, the required bound follows.
\end{proof}

The bound is $\log(9/4)/\log2\approx1.169925$, obtained from $\dimhaus(S)=\log3/\log2$.

\begin{remark}
\label{rem:ae-upper-resonance}
The proof excludes the directional resonances $\Theta_0$, as well as the null sets in the separation lemmas.
At a directional resonance the determinant of a non-zero lattice pair can vanish, so the lower bound used in Lemma~\ref{lem:ae-dichotomy} is unavailable.
The angle $\pi/6$ is one such example, discussed in Section~\ref{sec:conjecture}.
Since $\Theta_0$ is countable, its exclusion does not affect the almost-everywhere statement.
\end{remark}

\section{The lower bound and numerical evidence}
\label{sec:ae-lower}

Theorem~\ref{thm:ae-upper} gives the expected upper bound for almost every angle.
The matching Hausdorff lower bound remains open.
We explain the obstacles, then describe the numerical experiments that motivate the conjecture.

\subsection{Why the lower bound is difficult}

The first difficulty is that the relative displacement is fixed.
Our intersection is $S\cap\mathbf R_\theta S$, so the two centres coincide at every angle.
Mattila's intersection theorem \cite[Theorem~4.1]{Mattila2021} already gives the expected dimension for a positive-measure set of relative displacements at almost every angle.
Indeed, the gasket is Ahlfors regular and $\frac32\dimhaus(S)=\frac{3\log3}{2\log2}>2$, so the hypotheses of that theorem hold with both sets equal to $S$.
Consequently, for almost every $\theta$,
\[
\dimhaus\bigl(S\cap(\mathbf R_\theta S-\vec c)\bigr)=2\dimhaus(S)-2
\]
for a set of $\vec c\in\mathbb R^2$ of positive Lebesgue measure.
The theorem does not determine whether $\vec c=\vec{0}$ belongs to this set.
An argument that averages over translations therefore needs an additional estimate at zero relative displacement to address our problem.

The difference measure $\nu_\theta$ makes this difficulty visible.
The proof in Section~\ref{sec:ae-upper} bounds $\nu_\theta(B(\vec{0},r))$ from above and uses that bound to control how many pairs of small pieces can meet.
A lower bound would require information in the opposite direction, which full $L^q$-dimension alone does not supply at a prescribed point.
Moreover, local dimension records only the logarithmic rate of decay of this mass: local dimension $2$ does not imply a positive lower density.
Neither statement by itself identifies zero as a translation for which the intersection has the expected Hausdorff dimension.

There is a further difficulty in passing from counts to Hausdorff dimension.
The number $Q_n(\theta)$ of intersecting pairs at level $n$ records how much of the intersection is visible at scale $2^{-n}$.
An exponential growth rate for these counts would give information about box dimension, but does not by itself control how the surviving pieces are distributed among their ancestors.
The mass distribution principle and Frostman's lemma make the required control across scales precise \cite[Mass distribution principle~4.2 and Corollary~4.12]{Falconer2003}.
To prove the Hausdorff lower bound, it is enough to construct, for each $0<t<2\dimhaus(S)-2$, a probability measure $\eta$ supported on $\mathcal S_\theta$ satisfying $\eta(B(\vec x,r))\le C_t r^t$ for every $\vec x\in\mathbb R^2$ and $r>0$; conversely, the conjectured lower bound would ensure the existence of such measures.
At resonant angles the finite graph provides such structure through its Perron--Frobenius eigenvectors.
At non-resonant angles the reachable relative displacements are infinite, and the recursion alone gives no corresponding uniform mass estimate.

\subsection{Numerical experiments and the conjecture}
\label{sec:conjecture}

The two classes of angles lead to different computations. At resonant angles, the dimension is given by a theorem, and we illustrate its dependence on the rotation angle. At non-resonant angles, finite-depth intersection counts test the conjectured dimension $2\dimhaus(S)-2$.

\paragraph{Resonant angles: dimensions from spectral radii.}
For a resonant angle, construct the finite matrix $\mathbf A_\theta$ from Definition~\ref{def:relative-displacement-matrix}.
Theorem~\ref{thm:commensurable-gives-finite-type} gives the exact formula $\dimhaus(\mathcal S_\theta)=\dimbox(\mathcal S_\theta)=\log_2\rho(\mathbf A_\theta)$.
Thus the dimension follows by evaluating the spectral radius, without extrapolation from finite depths.
The decimal values below use numerical evaluations of the spectral radius.
The sample is parametrised by $e^{i\theta}=\overline{(m+n\omega)}/(m+n\omega)$, with size parameter $N:=m^2-mn+n^2$.
The stored data contain $218$ points over one period $[0,2\pi/3]$, including reflected copies, with $7\le N\le597$.
Table~\ref{tab:commensurable-dimensions} gives selected values, together with the known symmetry cases.
The blue points in Figure~\ref{fig:commensurable-dimensions} show how these dimensions vary with the rotation angle; apart from the symmetry cases, the sampled dimensions lie between $1.14468$ and $1.20733$.

\paragraph{Non-resonant angles: testing the conjecture by intersection counts.}
For $n\ge0$, define the intersection count
\[
Q_n(\theta):=\#\bigl\{(\mathbf i,\mathbf j)\in\mathcal I^n\times\mathcal I^n:
F_{\mathbf i}(S)\cap\mathbf R_\theta F_{\mathbf j}(S)\neq\varnothing\bigr\}.
\]
This counts ordered address pairs whose level-$n$ cells intersect after rotation; here $F_{\mathbf i}:=F_{i_1}\circ\cdots\circ F_{i_n}$, with the empty word giving the identity.
Thus we count address pairs, rather than distinct relative displacements or distinct intersection sets.
These counts are comparable to covering numbers.
Let $N_r(\mathcal S_\theta)$ be the least number of closed balls of radius $r$ needed to cover $\mathcal S_\theta$.
Then, with an absolute constant $C$ independent of $n$ and $\theta$,
\[
N_{2^{-n}}(\mathcal S_\theta)\le Q_n(\theta)\le C N_{2^{-n}}(\mathcal S_\theta).
\]
Indeed, the intersection of each pair is contained in its blue cell, which lies in a ball of radius $2^{-n}$ about its centre.
Conversely, distinct blue level-$n$ cell centres are distinct points of $2^{-n}\boldsymbol\Lambda$, hence are separated by at least $2^{-n}$; the same holds for the red centres after rotation.
A ball of radius $2^{-n}$ meeting a cell has its centre within $2^{1-n}$ of that cell's centre, so a planar packing bound limits the number of blue and red cells it can meet by an absolute constant.
For each intersecting pair, choose a point in the intersection and a covering ball containing it; each ball is assigned only a bounded number of pairs.
This proves the reverse inequality.
Thus the lower and upper box dimensions are respectively the limit inferior and limit superior of $\log_2 Q_n(\theta)/n$; finite-depth slopes estimate this growth rate.

For a non-resonant angle, we start from $\vec c_0:=\vec{0}$ and iterate
\[
\vec c_{k+1}:=2\vec c_k+\vec p_{i_{k+1}}-\mathbf R_\theta\vec p_{j_{k+1}},
\qquad (i_{k+1},j_{k+1})\in\mathcal I^2.
\]
By Lemma~\ref{lem:filling}, a child pair intersects precisely when its relative displacement belongs to the polygon $\mathbf R_\theta T-T$.
We retain these children and count the surviving ordered pairs of address words to obtain $Q_n(\theta)$.
Different pairs are counted separately even when they have the same relative displacement.
We compute through depth $20$ and examine $(4/9)^nQ_n(\theta)$, the normalisation corresponding to the predicted growth rate $9/4$.
The implementation uses double precision; changing the tolerance in the polygon membership test from $10^{-10}$ to $10^{-8}$ leaves all reported counts unchanged.
The counting and plotting scripts, together with the counts at every depth, are available at \url{https://github.com/Nero-17/LEAN-Formalisation-Moire-Pattern-Sierpinski-Gasket/tree/main/numerics}.
The blue-point data, red-point estimates and all non-resonant counts are also available in a compact JSON file in the Lean repository: \url{https://github.com/Nero-17/LEAN-Formalisation-Moire-Pattern-Sierpinski-Gasket/blob/main/data/dimension-results.json}.

Figure~\ref{fig:survivor-counts} shows the normalised counts at four non-resonant angles.
We fit $\log_2Q_n(\theta)$ against $n$ by least squares with an intercept, using the slope as a finite-scale growth exponent.
Table~\ref{tab:survivor-counts} compares the results over different depth intervals.
Using $n=16,\ldots,20$ for all four angles gives slopes within $0.0008$ of $2\dimhaus(S)-2\approx1.169925$, each closer to this value than the corresponding fit over $n=8,\ldots,12$.
Figure~\ref{fig:commensurable-dimensions} plots these deeper estimates in red, with the conjectured value marked by a red dashed line.
The sample includes $\pi/6$, where the directional resonance discussed in Remark~\ref{rem:ae-upper-resonance} prevents direct application of the argument in Section~\ref{sec:ae-upper}.

The fits still depend on the depth interval: at $\arctan(\sqrt3/2)$, for example, fitting over $n=12,\ldots,20$ gives a slope of approximately $1.165102$.
The differences above are therefore not rigorous error bounds for the dimension, and finite-depth counts do not establish a Hausdorff lower bound.
Nevertheless, the closer agreement with the predicted growth rate at greater depths provides numerical support for the dimension conjecture below.

\begin{figure}[H]
\centering
\begin{tikzpicture}
\begin{axis}[
    width=0.83\linewidth, height=4.5cm,
    xmin=0, xmax=20, ymin=0.8, ymax=5.6,
    xtick={0,4,8,12,16,20}, ytick={1,2,3,4,5},
    xlabel={depth $n$}, ylabel={$(4/9)^nQ_n(\theta)$},
    grid=major, grid style={gray!18},
    tick label style={font=\small}, label style={font=\small},
    legend style={at={(0.5,-0.42)}, anchor=north, legend columns=4,
                  font=\small, draw=none,
                  /tikz/every even column/.append style={column sep=7pt}},
]
\addplot[blue!75!black, thick, mark=*, mark size=1.4pt]
    table[x=depth,y=pi_four]{numerics/survivor_plot.dat};
\addlegendentry{$\pi/4$}
\addplot[crimson, thick, mark=square*, mark size=1.4pt]
    table[x=depth,y=pi_six]{numerics/survivor_plot.dat};
\addlegendentry{$\pi/6$}
\addplot[black, thick, mark=triangle*, mark size=1.7pt]
    table[x=depth,y=pi_twelve]{numerics/survivor_plot.dat};
\addlegendentry{$\pi/12$}
\addplot[green!45!black, thick, mark=diamond*, mark size=1.7pt]
    table[x=depth,y=atan_sqrt3_half]{numerics/survivor_plot.dat};
\addlegendentry{$\arctan(\sqrt3/2)$}
\end{axis}
\end{tikzpicture}
\caption{Normalised intersection counts at four non-resonant angles.
The normalisation corresponds to the dimension $\log_2(9/4)=2\dimhaus(S)-2$.}
\label{fig:survivor-counts}
\end{figure}

\begin{table}[H]
\centering
\begin{minipage}[t]{0.41\textwidth}
\vspace{0pt}
\footnotesize
\setlength{\tabcolsep}{2pt}
\centering
\begin{tabular}{c|rrr}
\hline
$\theta$ & $8$--$12$ & $16$--$20$ & $18$--$20$\\
\hline
$\pi/4$ & $1.171370$ & $1.170000$ & $1.170139$\\
$\pi/6$ & $1.152566$ & $1.170392$ & $1.169893$\\
$\pi/12$ & $1.178328$ & $1.170130$ & $1.170307$\\
$\arctan(\sqrt3/2)$ & $1.177542$ & $1.169157$ & $1.168877$\\
\hline
\end{tabular}
\caption{Least-squares slopes of $\log_2Q_n(\theta)$ against $n$ over the indicated depth intervals.}
\label{tab:survivor-counts}
\end{minipage}\hfill
\begin{minipage}[t]{0.57\textwidth}
\vspace{0pt}
\footnotesize
\setlength{\tabcolsep}{3pt}
\centering
\begin{tabular}{ccccc}
\hline
$(m,n)$ & $N$ & $\theta$(rad) & $\log_2\rho(\mathbf A_\theta)$ & \shortstack{difference from\\ $2\dimhaus(S)-2$}\\
\hline
$(1,0)$ & $1$ & $0$ & $1.5849625$ & $+0.4150375$\\
$(1,-1)$ & $3$ & $\pi/3$ & $1.2924813$ & $+0.1225563$\\
$(3,1)$ & $7$ & $0.6670$ & $1.1702210$ & $+0.0002960$\\
$(4,1)$ & $13$ & $0.4851$ & $1.1981591$ & $+0.0282341$\\
$(5,1)$ & $21$ & $0.3803$ & $1.2073297$ & $+0.0374047$\\
$(5,2)$ & $19$ & $0.8173$ & $1.1680358$ & $-0.0018892$\\
$(7,1)$ & $43$ & $0.2650$ & $1.1694128$ & $-0.0005122$\\
\hline
\end{tabular}
\caption{Selected values from the resonant-angle computation, with the symmetry cases added.
}
\label{tab:commensurable-dimensions}
\end{minipage}
\end{table}

\begin{figure}[H]
\centering
\includegraphics[width=0.7\linewidth]{figures/dimensions-comparison.pdf}
\caption{Dimensions and finite-scale estimates.}
\label{fig:commensurable-dimensions}
\end{figure}

The spectral-radius calculation displays the angular variation of the resonant dimensions. The non-resonant counting experiments support the following conjecture, whose lower bound remains open.

\begin{conjecture}
\label{conj:non-resonant-dimension}
Let $\theta\in[0,\pi/3]$ with $e^{i\theta}\notin\mathbb Q(\omega)$.
Then
\[
\dimhaus(\mathcal S_\theta)=\dimbox(\mathcal S_\theta)=2\dimhaus(S)-2.
\]
\end{conjecture}

\section*{Acknowledgement}
The author would like to thank Ruiqi Wu and Alex M. Ganose from the Department of Chemistry at Imperial College London for inspiring this paper, whose initial motivation came from twisted bilayer graphene.
The author also thanks Yaxin Zheng for her help.

This work was supported by the Additional Funding Programme for Mathematical Sciences, delivered by EPSRC (EP/V521917/1) and the Heilbronn Institute for Mathematical Research, and also by the EPSRC Centre for Doctoral Training in Mathematics of Random Systems: Analysis, Modelling and Simulation (EP/S023925/1).

\bibliographystyle{plain}
\bibliography{references}

\end{document}